\numberwithin{equation}{section}
\DeclareMathOperator{\Int}{Int}
\newtheorem{theorem}{Theorem}[section]
\newtheorem{thm}{Теорема}[section]
\newtheorem{defin}{Определение}[section]
\newtheorem{lemma}[theorem]{Лемма}
\newtheorem{prop}[theorem]{Предложение}
\newtheorem{remark}[theorem]{Замечание}
\title{Свойства межфазовой границы\\ в параболической задаче с гистерезисом}
\author{Д.Е. Апушкинская, С.Б. Тихомиров, Н.Н. Уральцева} 
\date{\today}
\begin{document}

\maketitle

\begin{abstract}

Исследуются решения параболических уравнений с разрывным оператором гистерезиса, описываемые свободной межфазовой границей. Установлено, что для пространственно трансверсальных начальных данных из пространства $W^{2-2/q}_q$ при $q > 3$ существует решение в пространстве $W^{2, 1}_q$, при этом межфазовая граница обладает гёльдеровой регулярностью с показателем 1/2. Более того для начальных данных из пространства $W^2_{\infty}$ доказано, что межфазовая граница удовлетворяет условию Липшица. Показано, что в случае нетрансверсальных начальных данных   решения с межфазовой границей не существуют.

\noindent
\textit{Ключевые слова: гистерезис, параболическое уравнение, межфазовая граница, трансверсальность, разрешимость.} 
\end{abstract}

\section{Введение}\label{sec:intro}
Рассматривается задача со свободной границей, описываемая уравнением теплопроводности с разрывной правой частью
\begin{equation}
\label{eq:caloric}
\partial_tu-\Delta u=f(u).
\end{equation}
В данной статье мы будем рассматривать нелокальную правую часть \eqref{eq:caloric} $f(u)$, порождаемую разрывным оператором гистерезиса\footnote{ В переводе с греческого ``гистерезис'' означает отставание, запаздывание}.

В простейшей модели гистерезиса заданы два пороговых значения $\alpha <0$ и  $\beta >0$.
Если уравнение \eqref{eq:caloric} описывает тепловой процесс, то 
при $u \geqslant \beta$ возможен только режим охлаждения, 
а при $u \leqslant \alpha$ возможен только режим нагревания. Отметим, что при $\alpha <u(x,t) <\beta$ процесс может находиться в любом из указанных режимов. 
Мы будем называть эти режимы ``фазами'': фазой I при $u \leqslant \alpha$  и фазой II при  $u \geqslant \beta$.  

Фаза II меняется на I, если решение $u$, убывая, достигает порогового значения $\alpha$, и наоборот фаза I меняется на фазу II, если решение $u$, возрастая, достигает порогового значения $\beta$. Таким образом явление гистерезиса заключается в свойстве процессов  (физических, биологических, социологических и т. д.), откликаться на приложенное к ним воздействие в зависимости от их текущего состояния.  Фактически поведение  описываемой системы на интервале времени существенно зависит от её предыстории.

Таким образом, область переменных $(x,t)$, которую мы рассматриваем, разбивается на два заранее неизвестных множества, а  граница раздела фаз является свободной границей. Если бы  
межфазовая граница была известна, то задача стала бы стандартной -- нужно найти решение уравнения теплопроводности с известной ограниченной правой частью и заданными начально-краевыми условиями. Поэтому основная трудность состоит в исследовании характера поведения свободной границы.

Оператор гистерезиса применяется в математических описаниях различных физических, химических и биологических процессов: терморегуляции, химических реакторов, ферромагнетизма, самоорганизации и других (см. монографии \cite{KraP1983, Vis1994, BroSpre1996}). Уравнение \eqref{eq:caloric} с разрывным оператором гистерезиса в правой части было впервые использовано в \cite{HJ1980} при моделировании роста колонии бактерий (Salmonella typhimurium). В работах \cite{HJ1980, HJP1984} был проведен численный анализ построенной модели, однако строгого обоснования предложено не было. 
Далее, аналогичные модели возникали при описании ряда биологических, технологических и химических процессов (см., например, \cite{AU15, CGT2016} и приведенные в них исторические обзоры и библиографию). Из-за разрывной природы гистерезиса вопрос о корректности постановки задачи  \eqref{eq:caloric}  нетривиален. Обычно рассматривается регуляризация оператора гистерезиса многозначным отображением. Это позволяет доказать существование решения \cite{Vis1986,Alt1985,Kop07}; однако вопрос единственности и непрерывной зависимости решения от начальных данных остаётся открытым \cite{Vis2014}.

В работе \cite{GShT13}  для одномерного случая $x \in  \Omega=(0,1)$ на
достаточно малом промежутке времени   $[0,T]$ доказана разрешимость в  $W^{2,1}_q
(Q_T)$, $q>3$, задачи о гистерезисе
\begin{equation}
\begin{aligned}
      \partial_tu-\Delta u&=\mathcal{H}(u) && \text{в}  \quad Q_T=\Omega \times (0,T],\\
      \partial_xu&=0 && \text{на} \ \, S_T=\partial\Omega \times (0,T],\\
      u(x,0)&=\varphi (x) && \text{в}  \quad \Omega,
\end{aligned}
\label{problem_P}
\end{equation}
где известная функция $\varphi \in W^{2-2/q}_q(\Omega)$ удовлетворяет условию трансверсальности (см. ниже)
и  условиям согласования $\partial_x \varphi(0)=\partial_x \varphi(1)=0$, а также полностью задано распределение по фазам: при $\varphi (x)\leqslant \alpha$   процесс
находится в фазе I, при  $\varphi (x)\geqslant \beta$ в фазе  II, а при  $\alpha<\varphi (x)<\beta$ начальные фазы можно задавать произвольно. 
Здесь $\mathcal{H}(u)$ -- значение оператора гистерезиса  на $u(x,t)$, которое в соответствии с описанием данным ранее, однозначно определяется следующим образом.
При $t=0$ распределение фаз нам известно, поэтому положим $\mathcal{H}(u(x,0))=1$ в фазе I и  $\mathcal{H}(u(x,0))=-1$ в фазе II соответственно.
Далее положим\footnote{Наше определение оператора $\cal{H}$ формально отличается от данного в \cite{GShT13}, но, как несложно увидеть, эквивалентно ему.}
\begin{equation}\label{eq:H0pm}
\mathcal{H}(u(x,t))=
\begin{cases}
\ \ 1, & u(x,t) \leqslant \alpha,\\
-1, &u(x,t) \geqslant \beta.
\end{cases}
\end{equation}
Обозначим теперь через $E$ множество
\begin{equation*}
E=\left\{(x,t) \in Q_T: u(x,t) \leqslant \alpha\right\}
\cup \left\{(x,t) \in Q_T: u(x,t) \geqslant \beta\right\}
\cup \left(\Omega \times \{0\}\right).
\end{equation*}
Далее, при $(x,t)\in \overline{Q}_T\setminus E$ положим 
\begin{equation} \label{def-hysteresis}
\mathcal{H}(u(x,t))=\mathcal{H}(u(x,\tau (x))),
\end{equation}
где $\tau(x)=\max\{s \leqslant t: (x,s)\in E\}$. Равенство \eqref{def-hysteresis} означает, что если $u(x,t)\in (\alpha,\beta)$, то оператор гистерезиса принимает  те же значения, что и в ``предыдущий момент'' времени (см. Рис.~\ref{fig:1}).
\begin{figure}
\centering
\includegraphics[scale=0.2]{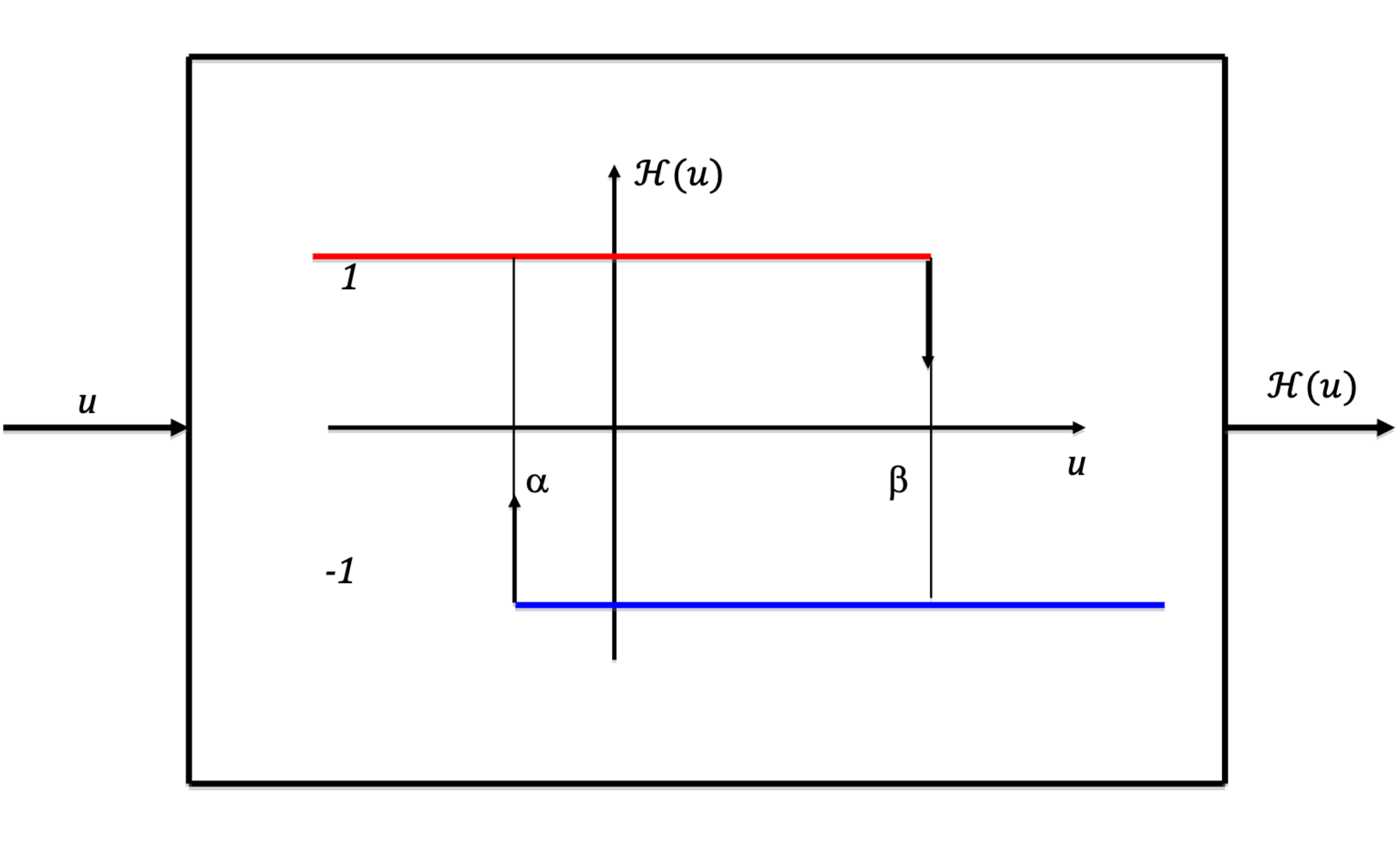} 
\caption{Оператор гистерезиса $\mathcal{H}(u)$.}\label{fig:1}
\end{figure}

Согласно определению, если $\mathcal{H}(u(x,t))=1$, то  $u(x,t)$ находится в фазе I, а если
$\mathcal{H}(u(x,t))=-1$, то  $u(x,t)$ находится в фазе II. При этом предполагается
непрерывность при данном $x$ значения оператора гистерезиса  $\mathcal{H}(u(x,t))$ со стороны б\'{о}льших значений $t$.

Важными являются введенные авторами \cite{GShT13}  условия трансверсальности начальной функции  $\varphi$. Оно состоит в том, что $|\partial_x\varphi (x)|>0$ в тех точках $(x,0)$, где меняется начальная фаза и $\varphi (x) \in \{\alpha, \beta\}$. В \cite{GShT13} было доказано, что если количество точек переключения фаз при $t=0$ конечно и выполнено условие трансверсальности, то каждая ветвь межфазовой границы начинается в некоторой точке при $t=0$  и на некотором малом промежутке времени является графиком гельдеровой функции. Более того, на этом промежутке ветви межфазовой границы не пересекаются. 
Единственность решения задачи \eqref{problem_P} с транверсальными начальными данными была установлена в \cite{GT12}.
В работе \cite{AU15a}, с помощью методов и подходов теории задач со свободными границами, были исследованы свойства локальной регулярности решений  \eqref{problem_P}.

В настоящей работе значительно упрощается по сравнению с \cite{GShT13} 
доказательство существования решения и гельдеровости межфазовых границ, а
также доказывается, что при
$\varphi \in  W ^2_{\infty}(\Omega)$
   каждая ветвь межфазовой границы описывается липшицевой функцией. Конструкция решения будет такая же, как и в \cite{GShT13}, новыми являются
односторонние оценки разностных отношений по  $t$. 

Если начальная функция $\varphi$ не удовлетворяет условию трансверсальности, то вопрос о корректности постановки задачи \eqref{problem_P} остается открытым. В работах \cite{GT2017, GT2018} рассматривалась аппроксимационная задача, основанная на дискретизации пространственной переменной. Было показано, что значения оператора гистерезиса для решения дискретизированной системы образуют сложный паттерн и не имеют предела, когда параметр дискретизации стремится к нулю.  В настоящей работе  мы показываем, что при нетрансверсальной начальной функции не существует решения, описываемого при помощи межфазовой границы. 

Без ограничения общности будем считать, что в точках изменения начальных фаз функция $\varphi$ принимает одно из пороговых значений $\alpha$ или $\beta$. 
Действительно,
если начальная точка межфазовой границы находится далеко от точек, где $\varphi=\alpha$ или $\varphi=\beta$, то на достаточно малом промежутке времени функция $u$ не сможет достигнуть порогового значения. Поэтому граница между фазами будет вертикальной прямой\footnote{Считаем, что ось $0t$ направлена вверх, а ось $0x$ направо.}, так что соответствующая ветвь межфазовой границы будет задаваться гладкой (постоянной) функцией переменной $t$. В этом случае доказательство разрешимости задачи \eqref{problem_P} хорошо известно (см., например, \cite[гл. IV]{LSU67}).

В начальный момент времени распределение фаз задано. В дальнейшем, при $t>0$, расположение фаз, вообще говоря, меняется.  Отметим также, что множества уровня $\{u(x,t)=\alpha\}$ и $\{u(x,t)=\beta\}$ не всегда являются частями межфазовой границы. Действительно, множество уровня $\{u=\alpha\}$
может содержаться внутри фазы I, а множество уровня $\{u=\beta\}$ -- внутри фазы II.
В этом случае межфазовая граница может содержать несколько компонент множеств уровня, соединенные вертикальными отрезками или лучами. Такие вертикальные куски межфазовой границы мы будем называть  ``спящей границей''.  
Пример возможной конструкции межфазовой границы приведен на Рис.~2. 

\begin{figure}

\medskip
\centering
\includegraphics[scale=0.24]{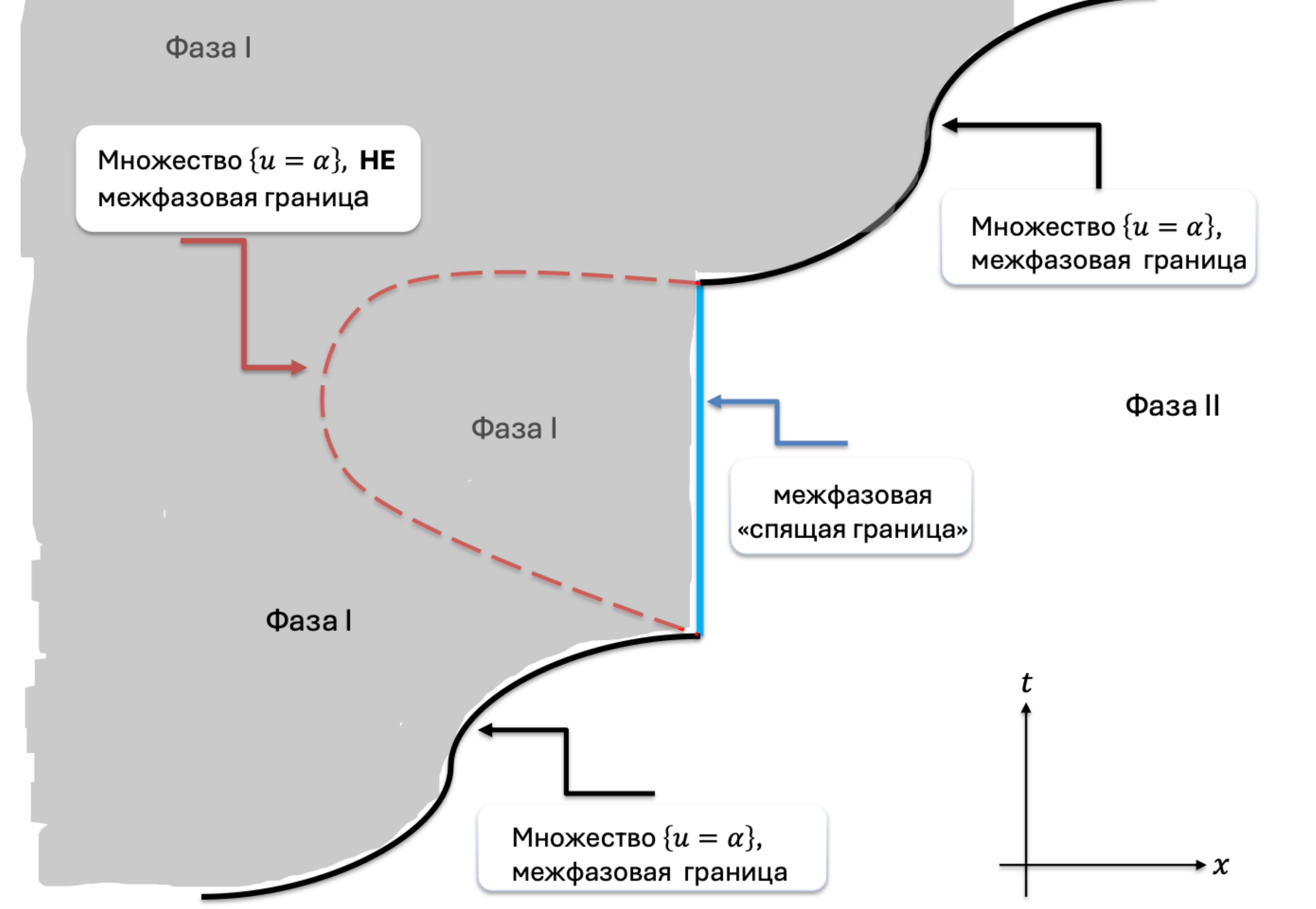} 
\caption{Возможный вариант структуры межфазовой границы.}
\label{fig:2}
\end{figure}

\vspace{0.5cm}

Введем обозначения, которые будут использоваться на протяжении всей статьи:

\noindent $z=(x;t)$ - точка в $\mathbb{R}^{2}_{x,t}$; 

\noindent
$|z|=\sqrt{|x|^2+|t|}$ - параболическое расстояние в $\mathbb{R}^2_{x,t}$;

\noindent
$\partial_x=\dfrac{\partial}{\partial x}$, \quad $\partial_t=\dfrac{\partial}{\partial t}$, \quad $\Delta=\dfrac{\partial^2}{\partial x^2}$;
\vspace{0.2cm}

\noindent
$\Omega=(0,1)$, \quad $Q_T=\Omega \times [0,T]$, \quad $S_T=\partial\Omega \times (0,T]$, \quad $T \leqslant 1$;
\vspace{0.1cm}

\noindent
$v_{+}=\max \{v(x,t),0\}$, \quad $v_{-}=\min \{v(x,t),0\}$. \vspace{0.2cm}

Мы используем стандартные обозначения для функциональных пространств $\mathcal{C}([0,T])$, $\mathcal{C}^1(\overline{\Omega})$ и $\mathcal{C}(\overline{Q}_T)$. Для ограниченной области ${\cal E}\subset\mathbb{R}^{2}_{x,t}$ мы обозначаем через $L^q (\cal{E})$, $1<q\leqslant \infty$, обозначим стандартное пространство Лебега с нормой $\|\cdot\|_{q,\cal{E}}$.

$W^{2,1}_q(Q_T)$ - параболическое пространство Соболева с нормой
$$
\|v\|_{W^{2,1}_q(Q_T)}=\|\partial_tv\|_{q,Q_T}+\|\Delta v\|_{q,Q_T}+\|v\|_{q,Q_T},
$$
а $W^{2-2/q}_q(\Omega)$ и $W^2_{\infty}(\Omega)$ -- пространства следов для $W^{2,1}_q(Q_T)$.

Различные положительные постоянные обознаются через $c$ и $N$ с индексами или без них. Запись $c(\dots)$ означает, что  $c$ зависит только от параметров, указанных в скобках.

\vspace{0.2cm}

Сформулируем ряд известных результатов о линейной задаче, которыми мы будем пользоваться в дальнейшем.

Рассмотрим линейную задачу
\begin{equation}
\label{linear_problem}
\left\{
\begin{aligned}
\partial_t v-\Delta v&=f(x,t) &&\text{в}\quad Q_T,\\
\partial_x v&=0 &&\text{на}\ \  S_T,\\
v(x,0)&=\varphi (x) &&\text{в}\quad \, \Omega,\\
\end{aligned}
\right.
\end{equation}
где  
начальная функция $\varphi$ нам известна, она точно такая же, как и в нелинейной задаче \eqref{problem_P}. 
Мы предполагаем, что функция $f \in L^{\infty}(Q_T)$  удовлетворяет условию 
\begin{equation*}
\sup\limits_{Q_T}|f(x,t)|\leqslant 1.
\end{equation*}
В дальнейшем $f$ будет выбираться специальным образом.

Теория краевых задач для параболических уравнений \cite[гл. IV]{LSU67} гарантирует, что при $1<q<\infty$ единственное решение $v\in W^{2,1}_q(Q_T)$ задачи \eqref{linear_problem} удовлетворяет оценке
\begin{equation}
\label{NN-2}
\|v\|^2_{W^{2,1}_q(Q_T)} \leqslant c(q)\left(\|f\|^2_{q,Q_T}+
\|\varphi \|^2_{W^{2-2/q}_q(\Omega)}\right).
\end{equation}

При $q>3$ из оценки \eqref{NN-2} и теоремы вложения (см., например, \cite[гл. II, лемма~3.1]{LSU67}) 
для любых $z_1, z_2\in \overline{Q}_T$ справедливы неравенства 
\begin{align}
|v(z_1)|+|\partial_x v(z_1)| &\leqslant c_0,
\label{NN-3aa}\\
|v(z_1)-v(z_2)| &\leqslant c_1 |z_1-z_2|,
\label{NN-3}\\
|\partial_x v(z_1)-\partial_x v(z_2)| &\leqslant c_2|z_1-z_2|^{\gamma}.
\label{NN-3a}
\end{align}
Здесь $\gamma=\gamma (q) \in (0,1)$, а $c_i$ ($i=0,1,2$) -- абсолютные константы, зависящие только от $q$ и $\varphi$.

Особо подчеркнем, что $c$ и $c_i$ не зависят от $T$. 

Работа организована следующим образом. В $\S 2$   очень подробно
обсуждается случай единственного изменения начальной фазы. 
Далее, в
$\S 3$ мы
рассматриваем   общий случай распределения начальный фаз. Наконец, в $\S 4$ доказывается
липшицева регулярность каждой ветви межфазовой границы в случае, если   $\varphi
\in W^2_{\infty}(\Omega)$. Еще раз отметим, что в \S\S~2-4 рассматриваются трансверсальные начальные данные. В \S \ref{sec:nont} рассматриваются нетрансверсальные начальные данные и показывается, что в этом случае не существует решений с четко определенной межфазовой границей.

\section{Случай одной  ветви межфазовой границы}\label{sec:single}

В этом параграфе мы рассмотрим задачу \eqref{problem_P} с единственной точкой смены начальных фаз.

Исследуем подробно один из возможных частных случаев.
Пусть $b$ -- единственная точка из $\Omega=(0,1)$, в которой у начальной функции $\varphi$ из задачи \eqref{problem_P} при $x<b$ лежит фаза I, а при $x>b$ -- фаза II. Дополнительно мы считаем, что
\begin{equation}
\label{eq:NN_2.5}
\varphi (b)=\alpha
\end{equation}
и выполнено условие трансверсальности
\begin{equation}
\label{eq:NN_2.6}
\partial_x \varphi(b) >0.
\end{equation}
Положим $m=\min\{1,\partial_x\varphi (b)\}.$

Из условия $\varphi \in W^{2-2/q}_q(\Omega)$, $3<q<\infty$, следует, что $\varphi \in {\cal{C}}^1(\overline{\Omega})$. Поэтому можно найти $\sigma>0$ такое, что $[b-\sigma,b+\sigma] \subset (0,1)$ и  справедливо неравенство
\begin{equation}
\partial_x\varphi (x) \geqslant \frac{m}{2}, \quad \text{при}\ |x-b|\leqslant \sigma.
\label{eq:NN_2.7a}    
\end{equation}
Из \eqref{eq:NN_2.7a} очевидно следует
\begin{equation}
\label{eq:NN_2.7}
\varphi (b+\sigma)>\alpha +\frac{m}{2}\sigma, \quad \varphi (b-\sigma)< \alpha -\frac{m}{2}\sigma.
\end{equation}

Поскольку $b$ единственная точка смены фаз, то вне  $\sigma$-окрестности точки $b$ начальная фаза постоянна, т.е.
\begin{equation}\label{eq:NN_2.8a}
\begin{aligned}
&\varphi (x) >\alpha, &&\text{при}\ x\in [b+\sigma,1],\\
&\varphi (x) < \beta, &&\text{при}\ x\in [0,b-\sigma].
\end{aligned}
\end{equation}

Рассмотрим сначала линейную задачу \eqref{linear_problem}, у которой в качестве начального условия выбрана функция $\varphi$ из нелинейной задачи \eqref{problem_P}, удовлетворяющая условиям \eqref{eq:NN_2.5}-\eqref{eq:NN_2.8a}. 

Cогласно \eqref{NN-2}-\eqref{NN-3a} подберем $T>0$ настолько малым, чтобы для любого решения $v\in W^{2,1}_q(Q_T)$ задачи \eqref{linear_problem} при $t\in [0,T]$ были справедливы неравенства
\begin{align}
v (x,t) &> \alpha, &&\text{при}\ x\in (b+\sigma,1]; \qquad 
\label{eq:NN_2.8}\\
 v(x,t) &<\beta, &&\text{при}\ x\in [0,b-\sigma); 
\label{eq:NN_2.8b}\\
\partial_x v(x,t) &\geqslant \frac{m}{4}>0,
&&\text{при}\ |x-b|\leqslant \sigma.
\label{eq:NN_2.10}
\end{align}
Из \eqref{eq:NN_2.10} очевидным образом следует
\begin{equation}
v (b-\sigma,t) \leqslant \alpha -\frac{m}{4}\sigma,
\qquad 
v(b+\sigma,t)\geqslant \alpha+\frac{m}{4}\sigma.
\label{eq:NN_2.8c}\\
\end{equation}
Положительные постоянные $\sigma$ и $T$, зависящие от $\varphi$, зафиксированы и далее меняться не будут. 

Согласно \eqref{eq:NN_2.8c} и \eqref{eq:NN_2.10}
при каждом $t\in [0,T]$ непрерывная и монотонная по $x$ функция $v(x,t)-\alpha$ меняет свой знак на промежутке $|x-b|\leqslant\sigma$. 
Поэтому  для любого $t\in [0,T]$ найдется единственное значение $x=a(t)$ такое, что $|a(t)-b|<\sigma$, $v(a(t),t)=\alpha$ и $a(0)=b$. Другими словами, множество уровня $v(x,t)=\alpha$ при $|x-b|<\sigma$ и $t\in [0,T]$ представляет собой кривую, определяемую явным уравнением $x=a(t)$. 

Покажем, что функция $a(t)$ непрерывна по Гельдеру. Действительно, возьмем произвольные $t_1, t_2 \in [0,T]$ такие, что $t_2>t_1$. По определению функции $a(t)$ выполнено
$$
v(a(t_1),t_1)=v(a(t_2),t_2)=\alpha.
$$
Если $a(t_2)>a(t_1)$, то справедлива оценка
\begin{align*}
\frac{m}{4} \left( a(t_2)-a(t_1)\right) &\leqslant v(a(t_2),t_2)-v(a(t_1),t_2)\\
&=v(a(t_1),t_1)-v(a(t_1),t_2)
\leqslant c_1 |t_1-t_2|^{1/2},
\end{align*}
где в первом неравенстве использована оценка \eqref{eq:NN_2.10}, а в последнем - оценка \eqref{NN-3}.
Если же $a(t_1) > a(t_2)$, то оцениваем так
\begin{align*}
\frac{m}{4} \left(a(t_1) - a(t_2)\right) &\leqslant v(a(t_1),t_1)- v(a(t_2),t_1) \\
&=v(a(t_2),t_2) -v(a(t_2),t_1) \leqslant  
c_1 |t_1-t_2|^{1/2}.
\end{align*} 
Таким образом, в любом случае верна оценка
\begin{equation}
\label{eq:NN-8}
|a(t_1)-a(t_2)|\leqslant \frac{4c_1}{m}|t_1-t_2|^{1/2}.
\end{equation}

Рассмотрим выпуклое замкнутое в $\mathcal{C}([0,T])$ множество $\mathbb{K}$, состоящее из непрерывных 
монотонно неубывающих функций $\xi: [0,T] \rightarrow \mathbb{R}$, удовлетворяющих условиям  
$$
\xi (0)=b, \qquad \xi(T)-b\leqslant \sigma.
$$

Определим для любой $\xi \in \mathbb{K}$  функцию $f^{\xi}$  следующим образом
\begin{equation}
\label{eq:NN-4}
f^{\xi}(x,t)=\begin{cases}
\ \ 1 &\text{при}\quad x \in (0, \xi(t)],\\
-1 &\text{при}\quad x \in (\xi(t), 1).
\end{cases}
\end{equation}

Далее, мы решаем задачу \eqref{linear_problem} с правой частью $f=f^{\xi}$ и находим функцию $a(t)$ по вышеописанной процедуре.

Установим теперь непрерывную зависимость $a$ от $\xi$ в пространстве $\mathcal{C}\left([0,T]\right)$. Пусть имеются две функции $\xi$ и $\tilde{\xi}$ из множества $\mathbb{K}$ такие, что 
$$
\|\xi -\tilde{\xi}\|_{\mathcal{C}\left([0,T]\right)} \leqslant \varepsilon,
$$
где $\varepsilon$ - произвольная положительная константа.
Поставим им в соответствие решения $v$ и $\tilde{v}$ задач \eqref{linear_problem} с правыми частями $f=f^{\xi}$ и $\tilde{f}=f^{\tilde{\xi}}$, заданными \eqref{eq:NN-4}, соответственно, а также функции $a$ и $\tilde{a}$.
Для разности $v-\tilde{v}$ имеем неравенство
\begin{align*}
\|v-\widetilde{v}\|_{W^{2,1}_q\left(Q_T\right)}&\leqslant
c\|f-\widetilde{f}\|_{q,Q_T}\\
&\leqslant c\left(\int\limits_0^T\int\limits_{\{|\xi(t)<x<\tilde{\xi}(t)|\}\cup\{|\tilde{\xi}(t)<x<\xi|(t)|\}}2^qdxdt\right)^{1/q}\leqslant 2c\varepsilon^{1/q}.
\end{align*}

Учитывая вложение $W^{2,1}_q(Q_T)\hookrightarrow \mathcal{C}\left(\overline{Q}_T\right)$, получаем оценку
$$
\max\limits_{z\in Q_T}|v(z)-\tilde{v}(z)|\leqslant N (q)
\varepsilon^{1/q}.
$$

Поскольку $v(a(t),t)=\tilde{v}(\tilde{a}(t),t)=\alpha$, в случае $a(t)>\tilde{a}(t)$ имеем
$$
\frac{m}{4}\left(a(t)-\tilde{a}(t)\right)\leqslant v(a(t),t)-v(\tilde{a}(t),t)=\tilde{v}(\tilde{a}(t),t)-v(\tilde{a}(t),t)\leqslant N\varepsilon^{1/q}.
$$
Если же $\tilde{a}(t)>a(t)$, то придем к оценке
$$
\frac{m}{4}\left(\tilde{a}(t)-a(t)\right)\leqslant v(\tilde{a}(t),t)-v(a(t),t)=v (\tilde{a}(t),t)-\tilde{v}(\tilde{a}(t),t)\leqslant N\varepsilon^{1/q}.
$$
В результате получаем
\begin{equation}
\label{eq:NN-10}
\|a-\widetilde{a}\|_{\mathcal{C}\left([0,T]\right)}\leqslant \frac{4N}{m}\varepsilon^{1/q}.
\end{equation}

Итак, мы доказали, что функция $a$ непрерывна по Гельдеру и отображение $\xi\mapsto a$ непрерывно в ${\cal{C}}\left([0,T]\right)$. Заметим, что функция $a$, вообще говоря, не является монотонной и по этой причине может не лежать в $\mathbb{K}$.

Введем функцию
\begin{equation}
\label{eq:NN-11}
a_0(t)=\sup\limits_{\tau \in [0,t]}a(\tau),
\end{equation}
которая является верхней монотонной оболочкой для $a$.

Очевидно, что $a_0 \in \mathbb{K}$ 
и представляет собой чередующуюся последовательность замкнутых интервалов строгого возрастания функции $a_0$ и открытых интервалов постоянства (в силу монотонности существует не более чем счетное множество таких интервалов постоянства). На интервалах строгого возрастания $a_0(t)=a(t)$ и, следовательно, $v(a_0(t),t)=\alpha$. 

\begin{lemma}
\label{lemma1}
Пусть $\xi, \tilde{\xi}\in \mathbb{K}$. Предположим, что $a$ и $\tilde{a}$ определяются функциями $\xi$ и $\widetilde{\xi}$ соответственно, а их монотонные оболочки $a_0$ и $\tilde{a}_0$ определяются соотношением  \eqref{eq:NN-11}.

Тогда справедливы следующие утверждения
\begin{align}
&1) &&|a_0(t_2)-a_0(t_1)| \leqslant \frac{4c_1}{m}|t_2-t_1|^{1/2}, \qquad  [t_1,t_2]\subset [0,T], \qquad  \label{eq:NN_2.16}\\
&2) &&\max\limits_{t\in [0,T]}|a_0(t)-\widetilde{a}_0(t)| \leqslant \max\limits_{t\in [0,T]}|a(t)-\widetilde{a}(t)|.
\label{eq:NN_2.17}
\end{align} 
\end{lemma}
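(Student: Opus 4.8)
The plan is to prove the two assertions separately, each by reducing to properties we already have for the non-monotonized curve $a$. For part~1), the Hölder estimate for the monotone envelope $a_0$, I would exploit the structure of $a_0$ as a supremum. Fix $[t_1,t_2]\subset[0,T]$ with $t_1<t_2$; since $a_0$ is nondecreasing we have $a_0(t_2)-a_0(t_1)\geqslant 0$, so it suffices to bound this difference from above. By definition $a_0(t_2)=\sup_{\tau\in[0,t_2]}a(\tau)$, and because $[0,T]$ is compact and $a$ is continuous (established before the lemma via estimate \eqref{eq:NN-8}), the supremum is attained: there is $\tau^*\in[0,t_2]$ with $a_0(t_2)=a(\tau^*)$. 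The key dichotomy is whether $\tau^*\leqslant t_1$ or $\tau^*\in(t_1,t_2]$. In the first case $a_0(t_2)=a(\tau^*)\leqslant a_0(t_1)$, forcing $a_0(t_2)=a_0(t_1)$ and the bound is trivial. In the second case $a_0(t_1)\geqslant a(t_1)$ is not directly useful, so instead I use $a_0(t_2)=a(\tau^*)$ together with $a_0(t_1)\geqslant a_0(\tau^*{\wedge}t_1)$; more cleanly, since $\tau^*>t_1$ there is a better route: pick the maximizer and compare $a(\tau^*)$ against the value $a(t_1)\leqslant a_0(t_1)$, giving
\begin{equation*}
a_0(t_2)-a_0(t_1)\leqslant a(\tau^*)-a(t_1)\leqslant \frac{4c_1}{m}|\tau^*-t_1|^{1/2}\leqslant \frac{4c_1}{m}|t_2-t_1|^{1/2},
\end{equation*}
where the middle step is exactly the Hölder bound \eqref{eq:NN-8} for $a$ and the last uses $t_1<\tau^*\leqslant t_2$. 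Thus part~1) follows.

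For part~2), the contraction-type estimate comparing the two envelopes, the strategy is again to reduce to the corresponding statement for $a$ and $\tilde a$. I would show the pointwise bound $|a_0(t)-\tilde a_0(t)|\leqslant\sup_{t\in[0,T]}|a(t)-\tilde a(t)|$ for each fixed $t$, then take the maximum over $t$. Write $D=\max_{s\in[0,T]}|a(s)-\tilde a(s)|$. The essential observation is that taking running suprema is a $1$-Lipschitz (nonexpansive) operation in the sup-norm. Concretely, for every $\tau\in[0,t]$ we have $a(\tau)\leqslant\tilde a(\tau)+D\leqslant\tilde a_0(t)+D$ (using $\tilde a(\tau)\leqslant\tilde a_0(t)$ since $\tau\leqslant t$), and taking the supremum over $\tau\in[0,t]$ on the left yields $a_0(t)\leqslant\tilde a_0(t)+D$. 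By symmetry $\tilde a_0(t)\leqslant a_0(t)+D$, so $|a_0(t)-\tilde a_0(t)|\leqslant D$ for each $t$, and maximizing over $t$ gives \eqref{eq:NN_2.17}.

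I do not expect a genuine obstacle here; both parts are soft consequences of the sup-envelope construction. The one point requiring a little care is the attainment of the supremum in part~1) (justifying the existence of the maximizer $\tau^*$), which rests on continuity of $a$ plus compactness of $[0,t_2]$; this is already available from \eqref{eq:NN-8}. In part~2) the only thing to be careful about is the direction of the inequalities when passing to suprema, but since $\sup$ preserves order and the defining bound $a(\tau)\leqslant\tilde a_0(t)+D$ holds uniformly in $\tau\in[0,t]$, the argument goes through symmetrically. The whole lemma is therefore an abstract fact about monotone envelopes, and the proof should be short.
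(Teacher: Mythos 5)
Your proof is correct and follows essentially the same route as the paper: both parts reduce to the H\"older estimate \eqref{eq:NN-8} for $a$ together with the nonexpansiveness of the running supremum, and your part~2) is the paper's argument verbatim in substance. The only (cosmetic) difference is in part~1), where you compare $a_0(t_2)=a(\tau^*)$ at a maximizer $\tau^*$ with $a(t_1)\leqslant a_0(t_1)$, whereas the paper compares the values of $a$ at the points $\overline{t_1}$ and $\underline{t_2}$ bounding the relevant constancy intervals of $a_0$; both versions work, and yours sidesteps the discussion of the interval structure of $a_0$.
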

\begin{proof}
Докажем утверждение 1). Если $t_1$ и $t_2$ попали на один интервал постоянства, то оценка \eqref{eq:NN_2.16} тривиальна. Иначе,
положим
\begin{align*}
\underline{t_2}&=\max\left\{\tau: \tau \leqslant t_2\ \text{и}\ a_0(\tau)=a(\tau)\right\},\\
\overline{t_1}&=\min\left\{\tau: \tau \geqslant t_1\ \text{и}\ a_0(\tau)=a(\tau)\right\}.
\end{align*}
Ясно, что  $t_1 \leqslant \overline{t_1} \leqslant \underline{t_2} \leqslant t_2$.
Тогда справедлива следующая оценка
\begin{align*}
0 \leqslant a_{0}(t_2)-a_{0}(t_1)=|a(\underline{t_2})-a(\overline{t_1})|
\leqslant \frac{4c_1}{m}\left(\underline{t_2}-\overline{t_1}\right)^{1/2}\leqslant \frac{4c_1}{m}(t_2-t_1)^{1/2},
\end{align*}
где первое неравенство следует из \eqref{eq:NN-8}.

Докажем утверждение 2). Ясно, что $\tilde{a}_0(t)=\tilde{a}(\tau)$ при некотором $\tau \in [0,t]$. Поэтому
$$
\tilde{a}_0(t)=\tilde{a}(\tau)-a(\tau)+a(\tau)\leqslant \tilde{a}(\tau)-a(\tau)+a_0(\tau) \leqslant \tilde{a}(\tau)-a(\tau)+a_0(t),
$$
и, следовательно,
$$
\tilde{a}_0(t)-a_0(t) \leqslant \tilde{a}(\tau)-a(\tau) \leqslant \max\limits_{[0,t]} (\tilde{a}-a).
$$

Меняя ролями $a$ и $\tilde{a}$ и проводя аналогичное рассуждение, заключаем, что
$$
|\tilde{a}_0(t)-a_0(t)|\leqslant \max\limits_{[0,t]}|a-\tilde{a}|\leqslant \max\limits_{[0,T]}|a-\tilde{a}|.
$$
Отсюда  следует искомое неравенство \eqref{eq:NN_2.17}.
\end{proof}

Итак, каждой функции $\xi$ из выпуклого замкнутого множества $\mathbb{K} \subset \mathcal{C}([0,T])$ соответствует функция $a_0\in \mathbb{K}$, причем отображение $a_0=\mathcal{R}(\xi)$  непрерывно в силу \eqref{eq:NN_2.17}, \eqref{eq:NN-10}  и компактно в силу \eqref{eq:NN_2.17}, \eqref{eq:NN_2.16} и теоремы Арцела-Асколи.  Поэтому, в соответствии с принципом Шаудера (см., например, \cite[стр. 628]{KA1984}) существует неподвижная точка $s(t)$ отображения $\mathcal{R}$ и $s(t)\in \mathbb{K}$. 

Заметим также, что если по $s(t)$ построить функцию $f^s(x,t)$
и решить линейную задачу \eqref{linear_problem} с такой правой частью, а затем у линии уровня решения $v(x,t)=\alpha$ взять верхнюю монотонную оболочку, то получим как раз $x=s(t)$. 
Ввиду \eqref{eq:NN_2.8}-\eqref{eq:NN_2.8b} вне отрезка $[b-\sigma, b+\sigma]$ у функции $v(x,t)$ нет точек переключения фаз. Поэтому $f^s(x,t)=\mathcal{H}(v(x,t))$.
Тем самым $u(x,t)=v(x,t)$ является решением нелинейной задачи \eqref{problem_P}, а монотонная кривая $x=s(t)$ -- соответствующей межфазовой границей. 

Строго говоря, кривая $x=s(t)$ представляет собой множество меры нуль и на ней можно задавать $f^s$ произвольным образом, но именно при определении \eqref{eq:NN-4} $f^s(x,\cdot)$ как функция второй переменной оказывается непрерывной справа и является значением пространственно распределенного оператора гистерезиса.

Сформулируем полученный результат.
\begin{thm}\label{thm:1}
Пусть $\varphi \in W^{2-2/q}_q(\Omega)$, $3<q<\infty$, пусть $\partial_x\varphi (0)=\partial_x\varphi (1)=0$ и пусть задано распределение $\varphi$ по фазам $I$ и $II$ такое, что смена фазы происходит только в одной точке $b$. Предположим также, что выполнены условия \eqref{eq:NN_2.5}-\eqref{eq:NN_2.6}.

Тогда существует $T>0$, зависящее от $q$ и $\varphi$, такое, что при $t\in [0,T]$ нелинейная задача \eqref{problem_P} имеет решение $u\in W^{2,1}_q(Q_T)$ и соответствующая межфазовая граница описывается уравнением $x=s(t)$, где $s(t)$ - неубывающая непрерывная по Гельдеру  функция с показателем~$1/2$.
\end{thm}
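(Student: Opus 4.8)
The plan is to recast the free-boundary problem as a fixed-point problem for the interface and then invoke the Schauder principle on the convex compact set $\mathbb{K}$ assembled above. To every candidate interface $\xi \in \mathbb{K}$ I associate the piecewise-constant right-hand side $f^{\xi}$ from \eqref{eq:NN-4}, solve the linear problem \eqref{linear_problem} with $f=f^{\xi}$ to obtain $v$, read off the $\alpha$-level curve $x=a(t)$ (well defined and single-valued near $b$ because of the strict monotonicity \eqref{eq:NN_2.10} and the sign change \eqref{eq:NN_2.8c}), and finally take its monotone envelope $a_0$ from \eqref{eq:NN-11}. This defines the operator $\mathcal{R}\colon \xi \mapsto a_0$. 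A fixed point $s=\mathcal{R}(s)$ is, by construction, an interface that reproduces itself, and the associated solution $v$ is the candidate for $u$; the value of $T$ is the small one already fixed before the statement.

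Next I verify the hypotheses of Schauder's theorem for $\mathcal{R}$ on $\mathbb{K}$. The set $\mathbb{K}$ is convex and closed in $\mathcal{C}([0,T])$ by definition. That $\mathcal{R}$ maps $\mathbb{K}$ into itself follows since $a_0$ is non-decreasing, $a_0(0)=a(0)=b$, and $a_0(T)\leqslant b+\sigma$ because $a(\tau)\in(b-\sigma,b+\sigma)$ for every $\tau$. Continuity of $\mathcal{R}$ is the composition of \eqref{eq:NN-10} (continuity of $\xi\mapsto a$) with estimate \eqref{eq:NN_2.17} of Lemma~\ref{lemma1} (the envelope map is $1$-Lipschitz in the sup-norm), giving Hölder continuity with exponent $1/q$. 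Compactness comes from the uniform Hölder-$1/2$ bound \eqref{eq:NN_2.16}, which yields equicontinuity and uniform boundedness of $\mathcal{R}(\mathbb{K})$, so Arzela--Ascoli makes $\mathcal{R}(\mathbb{K})$ relatively compact. Schauder's principle then produces $s\in\mathbb{K}$ with $s=\mathcal{R}(s)$, and \eqref{eq:NN_2.16} applied to $s=a_0$ delivers the monotonicity and the Hölder-$1/2$ regularity claimed.

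The genuinely delicate step is to check that the fixed point actually solves the nonlinear problem \eqref{problem_P}, that is, that the chosen right-hand side $f^{s}$ coincides with the true hysteresis value $\mathcal{H}(v)$ of the corresponding solution. These two a priori distinct objects must be reconciled: $f^{s}$ switches sign exactly across $x=s(t)$, whereas $\mathcal{H}$ switches according to the memory rule \eqref{def-hysteresis}. The localization estimates \eqref{eq:NN_2.8}--\eqref{eq:NN_2.8b} guarantee that on $[0,T]$ no phase switch occurs outside $[b-\sigma,b+\sigma]$, and in particular $v$ never reaches $\beta$, so no phase-I point turns back into phase II. On the intervals where $a_0$ is strictly increasing one has $a_0(t)=a(t)$ and hence $v(s(t),t)=\alpha$, so the level set genuinely marks the switch; on the intervals of constancy the interface is a vertical ``sleeping'' segment, which is consistent with the memory rule because those points have already switched to phase I and, $v$ not reaching $\beta$, remain there. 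Finally the right-continuity convention built into \eqref{eq:NN-4} matches the one imposed on $\mathcal{H}$, so $f^{s}=\mathcal{H}(v)$ and $u:=v\in W^{2,1}_q(Q_T)$ (by the linear estimate \eqref{NN-2}, since $|f^{s}|\leqslant 1$) solves \eqref{problem_P} with interface $x=s(t)$.

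I expect this last consistency verification to be the main obstacle. The Schauder machinery mechanically yields a self-consistent $\alpha$-level curve, but establishing that the monotone envelope $a_0$, rather than $a$ itself, is the correct description of the physical interface, and that the vertical ``sleeping'' pieces are compatible with hysteresis memory, is where the specific structure of the problem — irreversibility of switches on a short time interval together with right-continuity in $t$ — really enters, and it is the one part of the argument that is not a routine application of linear parabolic theory plus a fixed-point theorem.
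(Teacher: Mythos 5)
Your proposal is correct and follows essentially the same route as the paper: the operator $\mathcal{R}:\xi\mapsto a_0$ built from the linear problem \eqref{linear_problem} with right-hand side $f^{\xi}$, the monotone envelope of the $\alpha$-level curve, Schauder's principle on $\mathbb{K}$ via \eqref{eq:NN_2.16}--\eqref{eq:NN_2.17} and \eqref{eq:NN-10}, and the final identification $f^{s}=\mathcal{H}(v)$ using the localization \eqref{eq:NN_2.8}--\eqref{eq:NN_2.8b}. The only cosmetic difference is that you spell out the consistency check on the ``sleeping'' vertical pieces in slightly more detail than the paper does.
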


\begin{remark}
В работе \cite{GShT13} было показано, что свободная граница является Гельдеровой функцией с показателем $\gamma < 1-3/q$. Таким образом при $q<6$ теорема \ref{thm:1} доказывает более высокую регулярность свободной границы.
\end{remark}

\begin{remark}
В теореме \ref{thm:1} мы взяли верхнюю монотонную оболочку функции $a$ в связи с условием \eqref{eq:NN_2.6}. 

При выполнении \eqref{eq:NN_2.5} и условия $\partial_x\varphi(b)<0$  рассматривается нижняя монотонная оболочка $a_0(t)$, определяемая формулой:
$$
a_0(t)=\inf\limits_{\tau\in[0,t]}a(\tau).
$$
Остальные возможные случаи рассматриваются аналогично.
\end{remark}    

\section{Случай нескольких ветвей  межфазовой границы}\label{sec:multy}

Рассмотрим общий случай задачи \eqref{problem_P}.

Пусть задано  начальное значение  $\varphi(x)=u(x,0)$. Будем считать, что $\lim_{x \to 0, 1}$ $\varphi(x) \notin \{\alpha, \beta\}$  и фиксированы
$n$  точек  $0<b_1< \dots<b_n<1$,  в которых (и только в них) происходит чередующаяся смена фаз в начальный
момент времени.
Будем считать, что для любого $i=1,\dots,n$ начальная функция $\varphi (b_i)$  принимает одно из пороговых значений $\alpha$  или  $\beta$, и выполнено условие трансверсальности\footnote{Кроме $b_i$, возможны и другие точки, в которых $\varphi$ принимает пороговые значения, но в них условие трансверсальности может не выполняться, поскольку они не являются точками смены фаз.} 
\begin{equation*}\label{eq:transversality}
|\partial_x\varphi (b_i)| > 0.
\end{equation*}
Это однозначно  определяет  pаспределение  фаз при 
$t=0$,  ибо   в начальный момент при  $\varphi (x) \leqslant \alpha$ 
возможна лишь фаза  I,  а при  $\varphi (x) \geqslant \beta$   возможна  лишь фаза II.

Положим    $m : =  \min\limits_{1\leqslant i \leqslant n} \left\{|\partial_x\varphi  (b_i)|   \right\}$.  Очевидно, что $m> 0$. Не умаляя общности, можно считать $m\leqslant 1$, иначе возьмем $m=1$. 

 Рассмотрим все индексы $i$, для которых  $\varphi (b_i)=\alpha$.  Выберем  $\sigma$ 
так, чтобы одно из чисел  $\varphi (b_i+\sigma)-\alpha$   и   $\varphi (b_i-\sigma) -\alpha$ было  больше   $m \sigma/2$,   а другое меньше    $ - m\sigma /2$.   То же самое  с заменой $\alpha$  на   $\beta$   сделаем в случаях  
$\varphi (b_i)=\beta$. Подчеркнем, что поскольку $\varphi \in W^{2-2/q}_q$, $3<q<\infty$, то мы можем выбрать одно и тоже значение $\sigma$ для всех $b_i$ и сделать $\sigma$ столь малым, чтобы $b_1>\sigma$, $b_n<1-\sigma$, а $b_{i+1}-b_i>2\sigma$ при $1\leqslant i\leqslant n-1$.

Аналогично \S 2, перейдем теперь к линейной задаче \eqref{linear_problem} с начальной функцией $\varphi$ из нелинейной задачи \eqref{problem_P}.

Затем, учитывая \eqref{NN-2}-\eqref{NN-3a},  выберем  $T$   достаточно малым, чтобы для решений $v(x,t)$ задачи \eqref{linear_problem}
при всех $t\in [0,T]$ и при всех индексах $i$ выполнялись условия:
\begin{enumerate}
    \item[1)]  одно из значений $v(b_i+\sigma,t)-\varphi (b_i)$ и $v(b_i-\sigma,t)-\varphi (b_i)$  больше чем $\dfrac{m}{4}\sigma$, а другое меньше чем $-\dfrac{m}{4}\sigma$;
    \item[2)] вне $\sigma$-окрестностей точек $b_i$ функции $v(x,t)$ удовлетворяют неравенству
$v(x,t)>\alpha$, если $\varphi (x)$ находится в фазе II, или неравенству $v(x,t)<\beta$, если $\varphi (x)$ находится в фазе I;
    \item[3)] $|\partial_xv(x,t)|>\dfrac{m}{4}$ при $|x-b_i|\leqslant \sigma$. Кроме того, в $\sigma$-окрестности каждой точки $b_i$ функция
    $\partial_xv(x,t)$ имеет тот же знак, что и $\partial_x \varphi (b_i)$.
\end{enumerate}
В дальнейшем мы считаем положительные постоянные $\sigma$ и $T$ зафиксированными (как и в предыдущем параграфе, их значения зависят только от $\varphi$).

Рассуждая так же как и в \S 2, нетрудно показать, что для всех индексов $i$  линии уровня $v(x,t)=\varphi (b_i)$ при $|x-b_i|<\sigma$ и $t\in [0,T]$ представляют собой кривые $x=a_i(t)$, такие что $a_i(0)=b_i$ и функции $a_i(t)$ удовлетворяют условию Гельдера по переменной $t$ с показателем $1/2$, точнее
\begin{equation*}
    |a_i(t_2)-a_i(t_1)|\leqslant \frac{4c_1}{m}|t_2-t_1|^{1/2}, \qquad \forall t_1,t_2 \in [0,T].
\end{equation*}
 
Будем обозначaть через   $Q^{*}_i $ прямоугольники
$$
Q^{*}_i=\{(x,t): |x-b_i|<\sigma,\ 0\leqslant t \leqslant T\}.
$$

Определим еще промежуточные прямоугольники $Q^{**}_i$,  лежащие между
$Q^{*}_i$ и  $Q^{*}_{i+1}$:  
$$
Q^{**}_i = [b_i+\sigma, b_{i+1}-\sigma] \times [0,T], \qquad
i=1,\dots, n-1. 
$$
Дополним их прямоугольниками   
$$
Q^{**}_0=[0,b_1-\sigma] \times [0,T] \quad  \text{и} \quad
Q^{**}_n=[b_n+\sigma, 1] \times [0,T].
$$

  С каждым  $Q^*_i$  связывается зависящее от $\varphi (b_i)$ и $\partial_x\varphi (b_i)$ выпуклое замкнутое в $\mathcal{C}([0,T])$ множество  $\mathbb{K}_i$, состоящее из монотонных 
 функций $\xi_i (t)$ таких,
что  $\xi_i(0)=b_i$ и                       $|\xi_i(t)-\varphi (b_i)| \leqslant \sigma 
$ при $t\in[0,T]$,
т.е. кривые $x=\xi_i(t)$ содержатся в соответствующих $\overline{Q^*_i}$. Заметим, что характер монотонности функций $\xi_i$ (т.е. невозрастание или неубывание) однозначно определяется значениями $\varphi (b_i)$ и $\partial_x\varphi (b_i)$. Возможные варианты монотонности $\xi_i$ приведены ниже в 
таблице~\ref{tab:1}.

\begin{sidewaystable}
\caption{}
\centering
\begin{tabular}{c|c|c|c}
\ & & & \\
\hline \hline
\ & & & \\
\textbf{Вариант} & \textbf{Поведение} $\varphi$ \textbf{в точке} $b_i$ &
 \textbf{Характер монотонности } & \textbf{Функция $f$ в $Q^*_i$} \\
 \ & & \textbf{$\xi_i(t)$ из $\mathbb{K}_i$} & \ \\
 \ & & & \\
 \hline \hline
 \ & & & \\
 1. &  $\varphi (b_i)=\alpha$ и $\partial_x\varphi (b_i) \geqslant m$ & неубывающие & $f(x,t)=\left\{
\begin{aligned}
&1, &&\text{при}\quad x\leqslant \xi_i(t),\\
-&1, &&\text{при}\quad x > \xi_i(t)
\end{aligned}
\right.
$ \\
\  & & & \\
\hline
\  & &  & \\
2.& $\varphi (b_i)=\alpha$ и $\partial_x\varphi (b_i) \leqslant -m$ & невозрастающие & $f(x,t)=\left\{
\begin{aligned}
&1, &&\text{при}\quad x\geqslant \xi_i(t),\\
-&1, &&\text{при}\quad x < \xi_i(t)
\end{aligned}
\right.
$ \\
\  & & & \\
\hline
\  & &  & \\
3. & $\varphi (b_i)=\beta$ и $\partial_x\varphi (b_i) \geqslant m$ & невозрастающие & $f(x,t)=\left\{
\begin{aligned}
-&1, &&\text{при}\quad x\geqslant \xi_i(t),\\
&1, &&\text{при}\quad x < \xi_i(t).
\end{aligned}
\right.
$ \\
\  & & & \\
\hline
\  & &  & \\
4. & $\varphi (b_i)=\beta$ и $\partial_x\varphi (b_i) \leqslant -m$ &  неубывающие & $f(x,t)=\left\{
\begin{aligned}
-&1, &&\text{при}\quad x\leqslant \xi_i(t),\\
&1, &&\text{при}\quad x > \xi_i(t).
\end{aligned}
\right.
$ \\
\  & &  & \\
\hline
\end{tabular}
\label{tab:1}
\end{sidewaystable}

\vspace{0.2cm}

Дальнейшие рассуждения проводятся
аналогично тому, как это
делалось в \S 2 для случая одной начальной точки смены фаз.

Сначала рассматриваем линейную задачу вида \eqref{linear_problem} с правой частью $f^{\boldsymbol{\xi}}(x,t)$, определяемой набором $\boldsymbol{\xi} (t) =(\xi_1(t),\dots, \xi_n(t))$, $\xi _i \in \mathbb{K}_i$, $\xi _i(0) = b_i$  и начальной функцией $\varphi$ следующим образом. В каждом прямоугольнике 
$Q^*_{i}$ функцию $f^{\boldsymbol{\xi}}(x,t)$ определяем согласно таблице~\ref{tab:1}. Далее, в прямоугольниках $Q^{**}_i$ считаем $f^{\boldsymbol{\xi}}(x,t) \equiv 1$, если в основании $Q^{**}_i$ (т.е. при $t=0$) функция $\varphi$ находится в фазе I, и полагаем $f^{\boldsymbol{\xi}}(x,t) \equiv -1$, если в основании $Q^{**}_i$ функция $\varphi (x)$ 
находится в фазе II.

Решаем задачу \eqref{linear_problem} с такой функцией $f^{\boldsymbol{\xi}}(x,t)$. Из \eqref{NN-3} следует, что кривые $v(x,t)=\varphi (b_i)$ или, что то же самое, кривые $x=a_i(t)$ гельдеровы по $t$ с показателем $1/2$. Кроме того, отображения $\xi_i \mapsto a_i$ непрерывны в $\mathcal{C}\left([0,T]\right)$. В силу условия трансверсальности, в прямоугольниках $Q_i^{*}$ справедливы неравенства
$$
\partial_x v \geqslant \frac{m}{4} \quad \text{при реализации вариантов 1 и 3}
$$
и
$$
\partial_x v \leqslant -\frac{m}{4} \quad \text{при реализации вариантов 2 и 4}.
$$ 
Как и ранее, функции $a_i(t)$ не обязаны быть монотонными и, следовательно,  содержаться в соответствующих $\mathbb{K}_i$. Поэтому для каждого $i=1,\dots,n$, рассмотрим вместо функции $a_i(t)$ ее монотонную оболочку (верхнюю в случаях 1 или 3, и нижнюю в случаях 2 или 4) $a_{i,0}(t)$  лежащую в $\mathbb{K}_i$.

Аналогично рассуждению из \S2, можно показать, что функции $a_{i,0}$, $i=1,\dots,n$, удовлетворяют условию Гельдера с показателем $1/2$ на промежутке $[0,T]$. 

Введем теперь в рассмотрение банахово пространство $\mathcal{B}=\left({\cal{C}}([0,T])\right)^n$ -- декартово произведение $n$ пространств (слоев) $\mathcal{C}\left([0,T]\right)$. 
Норма элемента $\mathbf{w}=(w_1,\dots, w_n) \in \mathcal{B}$ определяется формулой
$$
\|\mathbf{w}\|_{\mathcal{B}}=\max\limits_{1\leqslant i \leqslant n}\|w_i\|_{{\cal{C}}([0,T])}.
$$
Из определения пространства $\mathcal{B}$ следует, что утверждение о выпуклости (замкнутости, компактности) множества $E=E_1\times \dots \times E_n$ в пространстве $\mathcal{B}$ равносильно выпуклости (замкнутости, компактности) каждого из множеств $E_i$  в пространстве $\mathcal{C}([0,T])$. 

Поскольку каждой функции $\xi_i(t)\in \mathbb{K}_i$, $i=1,\dots,n$, сопоставляется функция $a_{i,0}(t)\in \mathbb{K}_i$, определен оператор 
$$
\mathcal{R}: \mathcal{B} \rightarrow \mathcal{B}, \qquad \mathcal{R} (\boldsymbol{\xi}(t))=(a_{1,0}(t), \dots, a_{n,0}(t)).
$$  
На каждом слое оператор $\mathcal{R}$ непрерывен, компактен и переводит  $\mathbb{K}_i$ в себя.
Таким образом, множество $\mathbb{K}_1\times \mathbb{K}_2\times \dots \times \mathbb{K}_n$ переходит в себя  и к оператору $\mathcal{R}$ применим принцип Шаудера о неподвижной точке.

Неподвижная точка $\mathbf{s}(t)=(s_1(t), \dots, s_n(t))$ оператора $\mathcal{R}$ -- это набор $n$ монотонных ветвей межфазовых границ нелинейной задачи \eqref{problem_P}. При этом, каждая ветвь $x=s_i(t)$ представляет собой чередующуюся последовательность интервалов строгой монотонности и интервалов постоянства. На интервалах строгой монотонности выполняется равенство $v(s_i(t),t)=\varphi (b_i)$. Соответствующее решение $v(x,t)$ совпадает с решением $u(x,t)$ нелинейной задачи \eqref{problem_P}.

Сформулируем теперь окончательный результат
\begin{thm}
\label{theorem2}
Пусть $\varphi \in  W^{2-2/q}_q(\Omega)$ при
$q>3$ и выполнено  $\partial_x\varphi(0)=\partial_x\varphi(1)=0$.

Предположим, что даны несколько различных точек $b_i$, $i=1,\dots,n$, расположенных внутри  $\Omega$, где происходит смена фаз
в начальный момент,  и $\varphi$ принимает в каждой из этих точек одно из пороговых значений $\alpha$ или $\beta$. Кроме того,  в точках $b_i$ выполнены условия трансверсальности, а также
необходимое условие очередности начальных фаз.

Тогда
существует  $T>0$, зависящее от $q$ и   $\varphi$, такое, что задача  \eqref{problem_P} при   $t \in [0,T]$   имеет решение
 $u \in W^{2,1}_q(Q_T)$, а межфазовые кривые
определяются   монотонными гельдеровыми функциями $x=s_i(t)$, $i=1,\dots,n$.
\end{thm}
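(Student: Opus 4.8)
План состоит в том, чтобы свести общий случай к уже разобранной в \S2 схеме для одной ветви, обрабатывая все $n$ ветвей одновременно в декартовом пространстве $\mathcal{B}=(\mathcal{C}([0,T]))^n$ и применяя принцип Шаудера к произведению $\mathbb{K}_1\times\dots\times\mathbb{K}_n$. Сначала я зафиксирую параметры задачи: полагаю $m=\min_i|\partial_x\varphi(b_i)|>0$ (при необходимости заменяя его на $\min\{m,1\}$) и выбираю $\sigma>0$ настолько малым, чтобы $\sigma$-окрестности точек $b_i$ были попарно непересекающимися и лежали в $(0,1)$ (то есть $b_1>\sigma$, $b_n<1-\sigma$ и $b_{i+1}-b_i>2\sigma$), а в каждой из них одно из значений $\varphi(b_i\pm\sigma)-\varphi(b_i)$ было больше $m\sigma/2$, а другое меньше $-m\sigma/2$. Затем, опираясь на не зависящие от $T$ оценки \eqref{NN-2}--\eqref{NN-3a}, я подберу $T$ столь малым, чтобы для решений линейной задачи \eqref{linear_problem} выполнялись условия 1)--3), сформулированные перед теоремой: знакопеременность величин $v(b_i\pm\sigma,t)-\varphi(b_i)$, сохранение начальной фазы вне $\sigma$-окрестностей и оценка $|\partial_x v|\geqslant m/4$ с правильным знаком внутри каждого $\overline{Q_i^*}$.

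Далее для каждого $i$ я введу выпуклое замкнутое множество $\mathbb{K}_i\subset\mathcal{C}([0,T])$ монотонных функций $\xi_i$ с $\xi_i(0)=b_i$, кривые $x=\xi_i(t)$ которых лежат в $\overline{Q_i^*}$; тип монотонности однозначно определяется по таблице~\ref{tab:1} знаком $\partial_x\varphi(b_i)$ и значением $\varphi(b_i)$. По набору $\boldsymbol{\xi}=(\xi_1,\dots,\xi_n)$ я построю правую часть $f^{\boldsymbol{\xi}}$, задавая её в каждом $Q_i^*$ согласно таблице~\ref{tab:1}, а в промежуточных прямоугольниках $Q_i^{**}$ --- постоянной $\pm1$ в соответствии с фазой $\varphi$ в их основании; необходимое условие очередности фаз гарантирует согласованность этого определения. Решив \eqref{linear_problem} с $f=f^{\boldsymbol{\xi}}$, я дословно повторю рассуждения \S2 для каждой ветви: линии уровня $v(x,t)=\varphi(b_i)$ внутри $Q_i^*$ задаются гёльдеровыми с показателем $1/2$ функциями $x=a_i(t)$ в силу \eqref{NN-3}, а отображения $\xi_i\mapsto a_i$ непрерывны в $\mathcal{C}([0,T])$ благодаря оценке на $\|f^{\boldsymbol{\xi}}-f^{\widetilde{\boldsymbol{\xi}}}\|_{q,Q_T}$. Переходя к монотонным оболочкам $a_{i,0}$ (верхним в вариантах 1, 3 и нижним в вариантах 2, 4), я применю лемму~\ref{lemma1}, чтобы получить гёльдеровость $a_{i,0}$ и непрерывность отображения $\xi_i\mapsto a_{i,0}$, причём $a_{i,0}\in\mathbb{K}_i$.

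После этого оператор $\mathcal{R}(\boldsymbol{\xi})=(a_{1,0},\dots,a_{n,0})$ действует из $\mathcal{B}$ в $\mathcal{B}$, покомпонентно непрерывен и компактен (по теореме Арцела--Асколи с равномерной по $\boldsymbol{\xi}$ гёльдеровой оценкой) и переводит выпуклый компакт $\mathbb{K}_1\times\dots\times\mathbb{K}_n$ в себя; поэтому по принципу Шаудера у него существует неподвижная точка $\mathbf{s}=(s_1,\dots,s_n)$. Для неё, как и в \S2, останется проверить, что $f^{\mathbf{s}}=\mathcal{H}(v)$ на всей области: вне $\sigma$-окрестностей точек $b_i$ функция $v$ в силу условия 2) не пересекает пороговых значений, так что фаза совпадает с начальной, а внутри каждого $Q_i^*$ значение оператора гистерезиса определяется ветвью $x=s_i(t)$ с её чередованием интервалов строгой монотонности (где $v(s_i(t),t)=\varphi(b_i)$) и интервалов постоянства (``спящая граница''). Тогда $u=v$ будет искомым решением задачи \eqref{problem_P}.

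Основную трудность я вижу именно в последнем шаге --- в проверке тождества $f^{\mathbf{s}}=\mathcal{H}(u)$, то есть в согласовании локально построенных ветвей с глобальным определением оператора гистерезиса \eqref{def-hysteresis}. Здесь критично, что выбор $\sigma$ разделяет окрестности различных точек переключения, условие 2) при малом $T$ исключает появление новых точек смены фаз в промежуточных прямоугольниках $Q_i^{**}$, а необходимое условие очередности начальных фаз делает определение $f^{\boldsymbol{\xi}}$ в $Q_i^{**}$ корректным и непротиворечивым; остальные шаги носят технический характер и повторяют уже проведённые в \S2 и в лемме~\ref{lemma1} рассуждения.
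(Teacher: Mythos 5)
Ваше предложение корректно и по существу дословно воспроизводит доказательство из статьи: тот же выбор $m$, $\sigma$ и $T$ с условиями 1)--3), те же прямоугольники $Q_i^*$ и $Q_i^{**}$, конструкция $f^{\boldsymbol{\xi}}$ по таблице~\ref{tab:1}, переход к монотонным оболочкам $a_{i,0}$ через лемму~\ref{lemma1} и применение принципа Шаудера к оператору $\mathcal{R}$ на произведении $\mathbb{K}_1\times\dots\times\mathbb{K}_n$ в пространстве $\mathcal{B}=(\mathcal{C}([0,T]))^n$. Заключительная проверка тождества $f^{\mathbf{s}}=\mathcal{H}(u)$ проведена так же, как в \S\ref{sec:single} и \S\ref{sec:multy}, поэтому существенных отличий от авторского рассуждения нет.
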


\section{О липшицевости межфазовой границы}\label{sec:lipschitz}

Выше было доказано существование решения задачи \eqref{problem_P}, в том числе установлена
гельдеровость межфазовой границы. Мы покажем, что  при дополнительном
условии  $\varphi  \in W^2_{\infty} (\Omega)$,  каждая ветвь межфазовой границы  липшицева.   

Начнем с доказательства
простого, но весьма полезного утверждения о решениях линейной задачи \eqref{linear_problem}.

\begin{lemma}
\label{lemma3}
Предположим, что  $v(x,t)$   -  решение задачи \eqref{linear_problem}, и пусть
$\varphi  \in W^2_{\infty} (\Omega)$. 
Существует постоянная  $N_0=N_0(\varphi)>0$ 
такая, что
\begin{equation}
\label{NN-0823-1}
|v(x,t) - \varphi (x)| \leqslant N_0t.      \end{equation}
\end{lemma}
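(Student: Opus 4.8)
The plan is to reduce the claim to a maximum-principle estimate for the difference $w := v - \varphi$. Since $\varphi$ does not depend on $t$ and $v$ solves \eqref{linear_problem}, a direct computation gives
$$
\partial_t w - \Delta w = f(x,t) + \Delta\varphi(x) \quad \text{in } Q_T.
$$
The hypothesis $\varphi \in W^2_{\infty}(\Omega)$ is precisely what makes the forcing term bounded: together with $\sup_{Q_T}|f| \leqslant 1$ it yields $|f + \Delta\varphi| \leqslant N_0$ almost everywhere, where $N_0 := 1 + \|\Delta\varphi\|_{L^{\infty}(\Omega)}$, a quantity depending only on $\varphi$. Moreover, using the compatibility condition $\partial_x\varphi(0) = \partial_x\varphi(1) = 0$ together with the homogeneous Neumann condition for $v$, the function $w$ inherits $\partial_x w = 0$ on $S_T$ and the zero initial data $w(x,0) = 0$.

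First I would introduce the explicit barrier $W(x,t) = N_0 t$, which satisfies $\partial_t W - \Delta W = N_0 \geqslant f + \Delta\varphi$, vanishes at $t=0$, and has zero spatial derivative on $S_T$. Then $h := w - W$ is a subsolution of the heat operator with $h(\cdot,0) = 0$ and $\partial_x h = 0$ on $S_T$. Applying the parabolic maximum principle for strong ($W^{2,1}_q$, $q>3$) solutions I conclude $h \leqslant 0$, i.e. $w \leqslant N_0 t$; the symmetric barrier $-N_0 t$ gives $w \geqslant -N_0 t$. Combining the two bounds yields \eqref{NN-0823-1} with the stated constant.

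The only delicate point is justifying the maximum principle in the presence of the Neumann condition, namely ruling out that a positive maximum of $h$ is attained on the lateral boundary $S_T$. I would handle this either by Hopf's boundary-point lemma (a strictly positive outward normal derivative at such a maximum would contradict $\partial_x h = 0$), or, more cleanly in this one-dimensional setting, by even reflection of $w$ across the endpoints $x=0$ and $x=1$: the vanishing Neumann trace makes the reflected function a $W^{2,1}_q$ solution of the heat equation on a larger interval (indeed a $2$-periodic solution on $\mathbb{R}$) with no lateral boundary, so the comparison with $\pm N_0 t$ follows from the maximum principle without boundary subtleties. Everything else is a routine application of the comparison principle, and since $N_0$ depends only on $\|\Delta\varphi\|_{L^{\infty}}$, the estimate is uniform in $t$ and in the admissible right-hand sides $f$, as required later in \S\ref{sec:lipschitz}.
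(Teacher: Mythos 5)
Your proof is correct and rests on the same barrier comparison as the paper's: both compare $\pm(v-\varphi)$ with the linear barrier $N_0t$, using that the forcing $f+\Delta\varphi$ is bounded precisely because $\varphi\in W^2_{\infty}(\Omega)$. The only real difference is how the sign of $w=N_0t\mp(v-\varphi)$ is established. The paper multiplies the differential inequality $\partial_t w-\Delta w\geqslant 0$ by the negative part $w_-$ and integrates by parts over $\Omega\times[0,t]$; since $w(\cdot,0)=0$ and $\partial_x w=0$ on $S_T$, the boundary terms vanish and one obtains $\tfrac12\int_{\Omega}w_-^2(x,t)\,dx\leqslant 0$, hence $w_-\equiv 0$. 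This energy argument disposes of exactly the point you flag as delicate --- a possible positive maximum on the lateral Neumann boundary --- without invoking the Hopf boundary-point lemma or reflection: the Neumann condition simply annihilates the boundary term in the integration by parts. Your two suggested remedies (Hopf's lemma for $W^{2,1}_q$ subsolutions, or even reflection across the endpoints, which is clean in one space dimension) are both legitimate, so your argument is complete; the paper's version is just slightly more self-contained at that step. A small point in your favour: you record the constant as $N_0=1+\|\Delta\varphi\|_{L^{\infty}(\Omega)}$, which correctly accounts for the contribution of $|f|\leqslant 1$, whereas the paper's closing suggestion $N_0=\sup_{\Omega}|\Delta\varphi|$ omits it.
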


\begin{proof}
Рассмотрим функцию
$$
w(x,t)=t\pm \varepsilon \left(v(x,t)-\varphi (x)\right).
$$
Легко видеть, что 
\begin{equation}
\label{NN-4.2}
\partial_tw -\Delta w=1\pm \varepsilon \left(f+\Delta\varphi\right)\geqslant0
\end{equation}
при достаточно малом $\varepsilon>0$.

Домножим правую и левую части \eqref{NN-4.2} на $w_-$  и проинтегрируем по $\Omega \times [0,t]$, $t\leqslant T$. Интегрируя по частям с учетом условий $w(x,0)=0$ и $\partial_xw \bigg|_{S_T}=0$, получим
$$
\frac{1}{2}\int\limits_{\Omega}w_-^2(x,t)dx+
\int\limits_{\Omega}\int\limits_0^t
\partial_x w_-^2(x,t)dtdx \leqslant 0,
$$
откуда $w_-\equiv 0$.

Итак, мы установили неравенство \eqref{NN-0823-1} с  $N_0= \varepsilon^{-1}$.
Можно, например, положить  $N_0=\sup\limits_{ \Omega} 
|\Delta \varphi(x)|$.
\end{proof}

Далее мы считаем, что выполнены условия теоремы~\ref{theorem2} и рассматриваем разностные отношения по переменной $t$ для 
решения  задачи \eqref{problem_P},  которые определяются
следующим образом
$$
u_{(h)} (x,t)=\frac{u(x,t+h)-u(x,t)}{h},   \quad h>0.
$$
Чтобы они были определены для некоторого диапазона значений  $h$, например при
$h\in (0,\epsilon]$, $\epsilon <T$, придется сократить  высоту рассматриваемых прямоугольников.

В частности, из \eqref{NN-2} вытекает, что при любом $ h \in (0,\epsilon]$ справедливо
неравенство
\begin{equation}
\label{NN-0823-2}
   \|u_{(h)}\|_{q,Q_{T-\epsilon}} \leqslant N_1,   
\end{equation}
поскольку возможно продолжение  $u(x,t)$   функцией   $\varphi (x)$  на  $t<0$  и
для продолженной функции сохраняется оценка  $L_q$-нормы $\partial_tu$.

Важным следствием неравенства \eqref{NN-0823-1} является оценка
\begin{equation}
\label{NN-0823-3}
    \max\limits_{x\in \Omega} |u_{(h)}(x,0)| \leqslant N_0,  \quad   h \in (0,\epsilon].  
\end{equation}

Попытка получить оценку  $|\max\limits_{\Omega} u_{(h)}(x,t)|$   при  $t>0$  была бы неудачной, 
поскольку    $u_{(h)}$   подчиняется соотношениям
\begin{equation}
\label{NN-0823-4}
\begin{aligned}
    \partial_t u_{(h)}-\Delta u_{(h)}&=f_{(h)} &&\text{в} \quad  Q_{T-\epsilon},\\                             
       \partial_x u_{(h)}&=0  &&\text{на}\ \, S_{T-\epsilon},\\
       |u_{(h)}(x,0)| &\leqslant N_0  &&\text{в} \quad \Omega,
\end{aligned}      
\end{equation}
где правая часть в уравнении \eqref{NN-0823-4} становится сингулярной в
окрестности межфазовой границы при  $h \to  0$.

Распределение фаз  при  $t=0$ 
определено однозначно функцией  $\varphi (x)$. В прямоугольниках\footnote{Мы сохраним прежние обозначения из \S3, так что теперь
$Q^*_i$ - прямое произведение $\sigma$-окрестности точки $b_i$ на  $[0,T-\epsilon]$ и т. д.}  $Q^{**}_i$, $i=0, \dots, n$, 
на всем промежутке   $[0,T-\epsilon]$  фаза сохраняет то же значение, которое было
при   $t=0$. 

\begin{lemma}
\label{@Nazarov}
\begin{equation}
\label{NN-0823-5}
   \max\limits_{Q^{**}_i}|u_{(h)}(x,t)| \leqslant N_2,  \quad i=0,\dots,n, 
\end{equation}
где постоянная  $N_2$  определяется $q$ и функцией  $\varphi$.
\end{lemma}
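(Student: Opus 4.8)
The starting point is the remark made just before the statement: throughout $Q^{**}_i$ the phase coincides with its value at $t=0$, so $\mathcal{H}(u)$ does not depend on $t$ there, and consequently $f_{(h)}\equiv 0$ in $Q^{**}_i$. Thus $u_{(h)}$ solves the homogeneous heat equation $\partial_tu_{(h)}-\Delta u_{(h)}=0$ in $Q^{**}_i$. The only genuine difficulty is that $\max_{Q^{**}_i}$ also runs over the lateral sides $x=b_i+\sigma$ and $x=b_{i+1}-\sigma$, which are shared with $Q^*_i$ and $Q^*_{i+1}$, i.e. precisely with the rectangles where $f_{(h)}$ becomes singular as $h\to 0$. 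An interior parabolic estimate centred at such a side would reach into that singular region and fail, so the core of the proof is to turn these lateral sides into genuinely interior points of a caloric region.

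To do this I would first keep the free boundaries uniformly away from those sides. By the H\"older-$1/2$ bound $|s_j(t)-b_j|\leqslant \tfrac{4c_1}{m}\,t^{1/2}$ with $s_j(0)=b_j$, established in \S\ref{sec:multy}, after decreasing $T$ if necessary so that $\tfrac{4c_1}{m}T^{1/2}\leqslant \sigma/2$, every branch stays in $[b_j-\sigma/2,\,b_j+\sigma/2]$ on $[0,T-\epsilon]$. Hence no free boundary enters the open enlarged rectangle $\widetilde{Q}_i=(b_i+\sigma/2,\,b_{i+1}-\sigma/2)\times[0,T-\epsilon]$, the phase there is again constant in $t$, and therefore $f_{(h)}\equiv 0$ on all of $\widetilde{Q}_i$. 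Consequently $u_{(h)}$ is caloric on $\widetilde{Q}_i$, and the closed rectangle $Q^{**}_i$ is laterally compactly contained in $\widetilde{Q}_i$ with a fixed margin $\sigma/2$.

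It then remains to invoke a local boundedness estimate for the heat equation (see \cite[гл. III]{LSU67}): for a function caloric on $\widetilde{Q}_i$ one has $\sup_{Q^{**}_i}|u_{(h)}|\leqslant C\big(\max_{x}|u_{(h)}(x,0)|+\|u_{(h)}\|_{q,\widetilde{Q}_i}\big)$, where $C$ depends only on $q$ and on the margin $\sigma/2$; here backward parabolic cylinders reach the top $t=T-\epsilon$, while points near $t=0$ are controlled through the initial-data term. Plugging in \eqref{NN-0823-3} and \eqref{NN-0823-2} yields $\sup_{Q^{**}_i}|u_{(h)}|\leqslant C(N_0+N_1)=:N_2$, with $N_2$ depending on $q$ and $\varphi$ only. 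For $i=0$ and $i=n$ one lateral side lies on $\partial\Omega$; there I would extend $u_{(h)}$ by even reflection across $x=0$ (resp. $x=1$), which is compatible with the Neumann condition $\partial_xu_{(h)}=0$ and preserves caloricity, and then apply the same estimate. The main obstacle, and the only step beyond routine parabolic theory, is precisely the second paragraph — certifying that the branches remain strictly inside $Q^*_i$, so that $Q^{**}_i$ is an interior subdomain of a region on which $u_{(h)}$ satisfies the homogeneous equation.
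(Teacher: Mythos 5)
Your proof is correct, and its overall architecture coincides with the paper's: enlarge $Q^{**}_i$ laterally to a rectangle that the interface provably does not enter, so that $u_{(h)}$ is caloric there, and then combine the initial bound \eqref{NN-0823-3} with the $L^q$ bound \eqref{NN-0823-2} through a local parabolic estimate with a fixed margin. The two sub-steps are, however, executed differently. For the confinement step you shrink $T$ once more so that the H\"older bound $|s_j(t)-b_j|\leqslant \tfrac{4c_1}{m}\,t^{1/2}\leqslant \sigma/2$ applies; the paper keeps $T$ fixed and instead takes the margin $\rho=\tfrac{m\sigma}{8\max_{Q_T}|\partial_x u|}$, so that at distance at most $\rho$ from $x=b_j\pm\sigma$ one has $|u-\varphi(b_j)|\geqslant \tfrac{m\sigma}{4}-\rho\max_{Q_T}|\partial_x u|>0$, hence no point of the level set (and so of its monotone envelope) can appear there --- a spatial-gradient argument via \eqref{NN-3aa} rather than a temporal-continuity one; both are legitimate, since all constants are independent of $T$. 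For the estimate itself, the paper does not quote a combined ``initial data plus $L^q$'' local boundedness inequality but derives it on the spot by splitting $u_{(h)}=w_1+w_2$: $w_1$ caloric with data $u_{(h)}|_{t=0}$ and constant lateral data, controlled by the maximum principle and \eqref{NN-0823-3}, and $w_2$ caloric with zero initial data, extended by zero to $t<0$ and bounded on $Q^{**}_i$ by interior (hypoellipticity) estimates in terms of its $L^q$ norm. Your quoted inequality is exactly what this decomposition yields, so nothing is lost, though citing it in that combined form is less standard than simply performing the splitting. Your explicit even reflection at $x=0,1$ for the boundary rectangles $Q^{**}_0$, $Q^{**}_n$ handles a point the paper leaves implicit and is a welcome addition.
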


\begin{proof}
Положим $$
\rho=\frac{m \sigma}{8 \max\limits_{Q_T} |\partial_xu|}<\frac{\sigma}{8}
$$ 
и рассмотрим расширенные промежуточные прямоугольники
\begin{gather*}
Q^{**}_{i,\rho}=[b_i+\sigma-\rho, b_{i+1}-\sigma+\rho]\times [0,T-\epsilon], \qquad i=1,\dots,n-1,\\
Q_{0,\rho}^{**}=[0,b_1-\sigma+\rho]\times [0,T-\epsilon], \qquad Q_{n,\rho}^{**}=[b_n+\sigma-\rho,1]\times[0,T-\epsilon].
\end{gather*}
Из доказательства теоремы~\ref{theorem2} следует, что при всех $i=0,\dots,n$ расширенные
прямоугольники 
$Q^{**}_{i,\rho}$ не пересекают межфазовые границы.


Представим $u_{(h)}$ в виде суммы $w_1+w_2$, где $w_1$  -- решение задачи
$$
\partial_t w_1 -\Delta w_1= 0 \quad \text{в}\quad Q^{**}_{i,\rho}, \qquad w_1\big|_{t=0}=u_{(h)}\big|_{t=0},
$$
постоянное на боковых границах $Q^{**}_{i,\rho}$.  По принципу максимума из \eqref{NN-0823-3} следует оценка
\begin{equation}
    \label{AN-1}
\max\limits_{Q^{**}_i}|w_1(x,t)| \leqslant N_0,  \quad i=0,\dots,n. 
\end{equation}
Поскольку $u_{(h)}$ в $Q^{**}_{i,\rho}$ удовлетворяет однородному уравнению теплопроводности, имеем
\begin{equation*}
\partial_t w_2 -\Delta w_2= 0 \quad \text{в}\quad Q^{**}_{i,\rho}, \qquad w_2\big|_{t=0}=0.
\end{equation*}
Поэтому функция $w_2$, продолженная нулем при $t<0$ также удовлетворяет однородному уравнению теплопроводности.
По теореме об оценке производных решения гипоэллиптического уравнения (см., например, \cite[Предложение 6.6]{Shu2003}) получим
$$
\max\limits_{Q^{**}_i}|w_2(x,t)| \leqslant c(q,\rho)\|w_2\|_{q,Q^{**}_{i,\rho}},  \quad i=0,\dots,n. 
$$
С учетом \eqref{NN-0823-2} и \eqref{AN-1} это дает \eqref{NN-0823-5}.
\end{proof}

Не умаляя общности, можно считать, что
$N_2 \geqslant N_0$.

Рассмотрим теперь какой-либо из прямоугольников $Q_i^*$, в которых происходит смена фаз. Как было доказано в \S3, межфазовые границы задаются монотонными функциями $x=s_i(t)$. Поэтому на каждой прямой параллельной оси  $0t$,
функция $f$ меняет знак не более одного раза. Следовательно, правая часть в первом из уравнений \eqref{NN-0823-4}  имеет определенный знак в каждом из прямоугольников $Q_i^*$,
что позволяет получить \textbf{\textit{односторонние}} оценки разностных отношений  $u$  по  
$t$. 

\begin{lemma}
\label{lemma4}
Для решений $u$ нелинейной задачи \eqref{problem_P} справедливы оценки
\begin{equation}
\label{NN-8-email}
\begin{aligned}
        \min\limits_{Q^*_i} u_{(h)} &\geqslant -N_2, &&\text{если} \ \varphi (b_i)=\alpha, \\  
        \max\limits_{Q^*_i} u_{(h)} &\leqslant N_2, &&\text{если} \ \varphi (b_i)=\beta,
\end{aligned}
\end{equation}
где $h$ -- произвольный параметр из $(0,\epsilon]$. 
\end{lemma}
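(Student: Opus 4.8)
The plan is to exploit the sign of the right-hand side $f_{(h)}$ of equation \eqref{NN-0823-4} inside each $Q^*_i$ and then to invoke the weak maximum principle for the heat operator. The starting point is the observation, already recorded in the text preceding the lemma, that inside $Q^*_i$ the interface is the graph of a single monotone function $x=s_i(t)$ (monotone by the construction underlying Theorem~\ref{theorem2}), so that $f=\mathcal{H}(u)$ equals $+1$ on one side of this graph and $-1$ on the other.

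First I would determine the sign of $f_{(h)}(x,t)=h^{-1}\bigl(f(x,t+h)-f(x,t)\bigr)$ case by case according to Table~\ref{tab:1}. Fix a point $(x,t)\in Q^*_i$. In the cases $\varphi(b_i)=\alpha$ (variants 1 and 2) the region carrying $f=+1$ is exactly the phase-I side, and the prescribed monotonicity of $s_i$ (nondecreasing in variant 1, nonincreasing in variant 2) guarantees that this region only grows as $t$ increases to $t+h$; hence at every point $f$ can jump only from $-1$ to $+1$ and never the reverse, so $f_{(h)}\geqslant 0$ throughout $Q^*_i$. Symmetrically, for $\varphi(b_i)=\beta$ (variants 3 and 4) the phase-II side carrying $f=-1$ is the one that expands in time, whence $f_{(h)}\leqslant 0$ in $Q^*_i$. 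Thus $f_{(h)}$ has a definite sign in each phase-change rectangle, and that sign is dictated solely by whether $\varphi(b_i)$ equals $\alpha$ or $\beta$, independently of the sign of $\partial_x\varphi(b_i)$.

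With the sign of $f_{(h)}$ in hand, the estimates follow from the maximum principle applied to $u_{(h)}$, which solves \eqref{NN-0823-4} (it lies in $W^{2,1}_q$, and for each fixed $h$ its right-hand side is bounded, taking values in $\{-2/h,0,2/h\}$). When $\varphi(b_i)=\alpha$ we have $\partial_t u_{(h)}-\Delta u_{(h)}=f_{(h)}\geqslant 0$, so $u_{(h)}$ is a supersolution and attains its minimum over $\overline{Q^*_i}$ on the parabolic boundary; when $\varphi(b_i)=\beta$ it is a subsolution and attains its maximum there. It then remains to bound $u_{(h)}$ on the parabolic boundary of $Q^*_i$, which consists of the base $\{t=0\}$ and the two lateral sides $\{x=b_i\pm\sigma\}$ (the top $\{t=T-\epsilon\}$ playing no role). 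On the base $|u_{(h)}|\leqslant N_0\leqslant N_2$ by \eqref{NN-0823-3}; each lateral side lies in the closure of an adjacent rectangle $Q^{**}_j$, where $|u_{(h)}|\leqslant N_2$ by Lemma~\ref{@Nazarov}, that is, by \eqref{NN-0823-5}. Combining these bounds yields $\min_{Q^*_i}u_{(h)}\geqslant -N_2$ in the $\alpha$-case and $\max_{Q^*_i}u_{(h)}\leqslant N_2$ in the $\beta$-case, as claimed.

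The only genuinely delicate point I expect is the sign determination of $f_{(h)}$: it rests on correctly matching the orientation of the phases (the side on which $f=\pm1$) with the prescribed monotonicity of $s_i$ in each of the four variants of Table~\ref{tab:1}. Everything else---the boundedness of $f_{(h)}$ for fixed $h$, the coverage of the two lateral sides by the closures of the rectangles $Q^{**}_j$, and the maximum principle itself---is routine and, crucially, uniform in $h$, which is exactly why the resulting constant $N_2$ does not depend on $h$.
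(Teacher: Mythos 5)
Your argument is correct and follows essentially the same route as the paper: the sign of $f_{(h)}$ is read off from Table~\ref{tab:1} exactly as you do, and the bound $|u_{(h)}|\leqslant N_2$ on the parabolic boundary of $Q^*_i$ is taken from \eqref{NN-0823-3} and \eqref{NN-0823-5}. The only difference is the final step: where you invoke the weak maximum principle for the super/subsolution $u_{(h)}$, the paper reproves that principle in place by multiplying \eqref{NN-0823-4} by $\left(u_{(h)}+N_2\right)_-$ and integrating by parts as in Lemma~\ref{lemma3}, which is the same argument in energy form.
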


\begin{proof}
Заметим, что из \eqref{NN-0823-5} следует, что на параболической границе
любого из  цилиндров $Q^*_i$ выполнено  неравенство  $\max |u_{(h)}| \leqslant N_2$. Далее, из таблицы~1 видно, что если $\varphi (b_i)=\alpha$, то $f_{(h)}\geqslant 0$ в $Q_i^*$, а если 
$\varphi (b_i)=\beta$, то $f_{(h)}\leqslant 0$ в $Q_i^*$.

Для получения первого из неравенств \eqref{NN-8-email}, аналогично доказательству леммы~\ref{lemma3}, умножим первое из уравнений
 \eqref{NN-0823-4}  на  $\left(u_{(h)}+N_2\right)_-$,  проинтегрируем по множеству $(b_i-\sigma, b_i+\sigma)\times (0,t)$, $t \leqslant T-\epsilon$, и произведем интегрирование по частям. 

Аналогично выводится второе из неравенств  \eqref{NN-8-email}.
\end{proof}

Теперь мы будем использовать совместно лемму~\ref{lemma4} и неравенства, вытекающие
из условия
трансверсальности. Пусть $\varphi (b_i)=\alpha$ и $\partial_x \varphi (b_i) \geqslant m$. Рассмотрим прямоугольник   $Q^*_i$ и   в нем
неубывающую межфазовую границу  $x=s_i(t)$,   выберем произвольные   $t^*<t^{**}$ 
из  $(0,T-\epsilon]$, в которых функция   $s_i(t)$  строго возрастает (напомним, что тогда $u(s_i(t^*),t^*)=u(s_i(t^{**}),t^{**})=\alpha$). Оценим 
разность  $s_i(t^{**})-s_i(t^*)$.  
В силу леммы~\ref{lemma4}  имеем
$$
u(s_i(t^{**}),t^*)-\alpha=u(s_i(t^{**}),t^*)-u(s_i(t^{**}),t^{**}) \leqslant N_2
\left(t^{**} -t^*\right).
$$
С другой стороны, в силу условия трансверсальности  
$$
u(s_i(t^{**}),t^*)-\alpha=u(s_i(t^{**}),t^*)-u(s_i(t^*),t^*)
\geqslant \frac{m}{4} \left(s_i(t^{**})-s_i(t^*)\right),
$$  
откуда 
$$
0 \leqslant s_i(t^{**})-s_i(t^*) \leqslant
\frac{4 N_2} {m} \left(t^{**}-t^*\right). 
$$
В последнем неравенстве значения   $t^*$  и   $t^{**}$  можно уже брать произвольными из  
$(0,T-\epsilon]$, поскольку на каждом из оставшихся кусков 
 межфазовой  кривой  приращение функции   $s_i(t)$  
равно нулю. 

Пусть теперь $\varphi (b_i)=\alpha$ и $\partial_x \varphi (b_i) \leqslant -m$. Тогда для произвольных $t^*<t^{**}$  из множества строгого убывания функции   $s_i(t)$ с помощью леммы~\ref{lemma4} и  условия трансверсальности доказывается, что
$$
  0 \geqslant s_i(t^{**})-s_i(t^*)\geqslant  -\frac{4N_2} {m} 
  \left(t^{**}-t^*\right).
$$
Далее, легко видеть, что это неравенство верно при любых   $t^*<t^{**}$   из  $(0,T-\epsilon]$. 
Поэтому в обоих случаях справедлива оценка
\begin{equation}
\label{NN-0823-7}
                  |s_i(t^{**})-s_i(t^*)| \leqslant \frac{4N_2} {m} \left(t^{**}-t^*\right).  
\end{equation}
Аналогично
устанавливается неравенство \eqref{NN-0823-7} в прямоугольниках $Q^*_i$ при условии $\varphi (b_i)=\beta$. 

Поскольку константа Липшица в \eqref{NN-0823-7} не зависит от выбора $\epsilon$, мы можем устремить $\epsilon$ к нулю. Это дает
следующий результат.

\begin{thm}
\label{theorem3}
Предположим, что выполнены условия теоремы~\ref{theorem2}  и кроме того $\varphi \in W^2_{\infty}
(\Omega)$.  

Тогда для любого фиксированного $q$,  $3<q<\infty$, решение задачи \eqref{problem_P}, полученное в теореме~\ref{theorem2}, принадлежит $W^{2,1}_q(Q_T)$.\footnote{Напомним, что $T$, вообще говоря, зависит от $q$.} При этом, функции $s_i(t)$, $i=1,\dots,n$, задающие ветви межфазовой границы удовлетворяют условию Липшица с константой $\frac{4N_2}{m}$.
\end{thm}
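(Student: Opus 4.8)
План состоит в том, чтобы собрать воедино уже подготовленные оценки: основная работа выполнена в леммах~\ref{lemma3}, \ref{@Nazarov}, \ref{lemma4} и в выводе неравенства~\eqref{NN-0823-7}. Принадлежность $u\in W^{2,1}_q(Q_T)$ я получаю прямо из теоремы~\ref{theorem2}. В самом деле, построенное решение совпадает с решением $v$ линейной задачи~\eqref{linear_problem}, у которой правая часть $f^{\mathbf{s}}$ по модулю не превосходит единицы; поэтому к $v$ применима априорная оценка~\eqref{NN-2}, и $W^{2,1}_q$-норма конечна при любом фиксированном $q\in(3,\infty)$ (с соответствующим этому $q$ значением $T$). Тем самым первое утверждение теоремы является по существу переформулировкой теоремы~\ref{theorem2}, и всё новое содержание сосредоточено в липшицевости.

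Чтобы установить липшицевость ветвей $x=s_i(t)$, я действую так. Для каждого $i$ фиксирую малое $\epsilon<T$, обеспечивающее определённость разностных отношений $u_{(h)}$ на $Q_{T-\epsilon}$, и воспроизвожу для всех четырёх вариантов таблицы~\ref{tab:1} вывод неравенства~\eqref{NN-0823-7} с константой $\tfrac{4N_2}{m}$ на интервалах строгой монотонности $s_i$. Ключевую роль играет монотонность $s_i$: вдоль каждой вертикали в $Q^*_i$ функция $f$ меняет знак не более одного раза, так что $f_{(h)}$ знакопостоянна; на интервалах постоянства $s_i$ приращение равно нулю, что позволяет распространить~\eqref{NN-0823-7} на произвольные $t^*<t^{**}$ из $(0,T-\epsilon]$. Поскольку константа $\tfrac{4N_2}{m}$ не зависит от $\epsilon$, я устремляю $\epsilon\to0$ и получаю липшицеву оценку на $(0,T)$; гёльдеровость $s_i$ вплоть до $t=0$ из теоремы~\ref{theorem2} замыкает оценку до всего отрезка $[0,T]$.

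Самым тонким местом я считаю одностороннюю оценку леммы~\ref{lemma4}, лежащую в основе~\eqref{NN-0823-7}. Двусторонней оценки $|u_{(h)}|$ внутри $Q^*_i$ ждать нельзя: при $h\to0$ правая часть $f_{(h)}$ в~\eqref{NN-0823-4} сингулярна, так как сосредоточена вблизи межфазовой границы. Выручает именно знакоопределённость $f_{(h)}$ (следствие монотонности свободной границы): при $\varphi(b_i)=\alpha$ имеем $f_{(h)}\geqslant0$, и, домножая уравнение на срезку $\bigl(u_{(h)}+N_2\bigr)_-$ с последующим интегрированием по $(b_i-\sigma,b_i+\sigma)\times(0,t)$, я получаю энергетическое неравенство в духе доказательства леммы~\ref{lemma3}. Граничный вклад пропадает: на боковых сторонах $x=b_i\pm\sigma$ в силу леммы~\ref{@Nazarov} выполнено $|u_{(h)}|\leqslant N_2$, а при $t=0$ срезка обращается в нуль ввиду~\eqref{NN-0823-3}; отсюда следует нижняя граница $u_{(h)}\geqslant-N_2$. Случай $\varphi(b_i)=\beta$ симметричен (со срезкой $\bigl(u_{(h)}-N_2\bigr)_+$). Связывая эту одностороннюю временную оценку с трансверсальностью $|\partial_x v|\geqslant\tfrac{m}{4}$, я перевожу ограничение на приращение $u$ по $t$ в ограничение на приращение $s_i$, что и даёт~\eqref{NN-0823-7}.
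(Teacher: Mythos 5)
Ваше предложение верно и по существу повторяет доказательство из статьи: принадлежность $W^{2,1}_q$ берётся из теоремы~\ref{theorem2}, а липшицевость выводится из односторонней оценки разностных отношений (лемма~\ref{lemma4}, опирающаяся на знакоопределённость $f_{(h)}$ из-за монотонности $s_i$ и на оценки лемм~\ref{lemma3} и~\ref{@Nazarov}) в сочетании с трансверсальностью, с последующим распространением \eqref{NN-0823-7} на интервалы постоянства и предельным переходом $\epsilon\to0$. Детали энергетического рассуждения (срезки $\bigl(u_{(h)}+N_2\bigr)_-$ и $\bigl(u_{(h)}-N_2\bigr)_+$, исчезновение граничных вкладов) воспроизведены корректно.
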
 

\section{Случай нетрансверсальных начальных данных} \label{sec:nont}

Ниже мы покажем, что в окрестности точки, в которой начальная функция $\varphi$ не удовлетворяет условию трансверсальности, межфазовая граница  не является графиком гельдеровой (и даже непрерывной) функции.

\begin{defin}
Будем \ говорить, \ что\  начальная \ функция \ $\varphi \in W^{2-2/q}_q(\Omega)$, $q>3$, из задачи \eqref{problem_P} \textit{топологически нетрансверсальна} в точке $x_0 \in \Omega$,  если для некоторой окрестности $(x_0-\varepsilon,x_0+\varepsilon)$ выполнено
одно из двух  условий:
\begin{itemize}
    \item процесс находится в фазе I при $x \in (x_0-\varepsilon,x_0)\cup(x_0,x_0+\varepsilon)$ и
\begin{equation}\label{eq:phix0}
  \varphi(x_0) = \beta,  \quad \varphi(x) < \beta \quad \text{при}\quad x \in (x_0-\varepsilon,x_0)\cup(x_0,x_0+\varepsilon);
\end{equation}
См. Рис. \ref{fig:nt}.
\begin{figure}[ht]
\centering
\includegraphics[width = 0.5\textwidth]{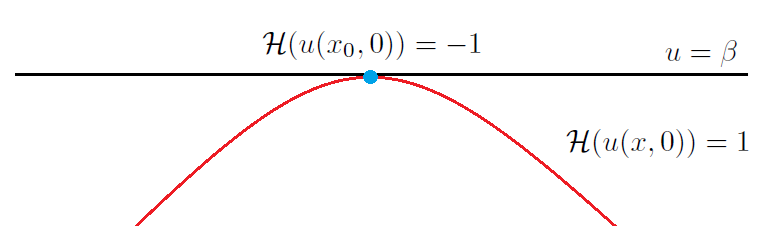} 
\caption{Пример нетрансверсальных начальных данных.}
\label{fig:nt}
\end{figure}

\item процесс находится в фазе II при $x \in (x_0-\varepsilon,x_0)\cup(x_0,x_0+\varepsilon)$ и
\begin{equation}\notag
  \varphi(x_0) = \alpha,  \quad \varphi(x) > \alpha\quad \text{при}\quad x \in (x_0-\varepsilon,x_0)\cup(x_0,x_0+\varepsilon).
\end{equation}
\end{itemize}
\end{defin}

\begin{remark}
    Отметим, что если начальная функция $\varphi$ топологически нетрансверсальна в точке $x_0$, то $\partial_x \varphi(x_0) = 0$, и следовательно $\varphi$ не удовлетворяет условиям трансверсальности из \S\S~\ref{sec:single}-\ref{sec:multy}.
\end{remark}

Предположим, что существует решение $u\in W^{2, 1}_q(Q_T)$ задачи \eqref{problem_P} с топологически нетрансверсальной начальной функцией $\varphi \in W^{2-2/q}_q(\Omega)$, $q>3$. Не умаляя общности, предположим, что выполнено условие \eqref{eq:phix0}, тогда  для некоторых $\varepsilon_1 < \varepsilon$ и $T_1 < T$ выполнено
\begin{equation}\label{eq:ualpha}
u(x, t) > \alpha, \quad x \in (x_0-\varepsilon_1,x_0+\varepsilon_1), \quad t \in (0, T_1);
\end{equation}
\begin{equation}\label{eq:ub}
u(x_0 \pm \varepsilon_1, t) < \beta, \quad t \in (0, T_1).
\end{equation}
Обозначим ${\mathcal{I}} = (x_0-\varepsilon_1,x_0+\varepsilon_1)$. Из \eqref{eq:ualpha} следует, что для любого $x \in {\mathcal{I}}$ фаза решения на интервале $t \in (0, T_1)$ меняется не более одного раза. При этом смена фазы в точке $x\neq x_0$ происходит в момент
\begin{equation}\label{eq:r}
r(x) = \inf_{t \in [0, T_1)}\{u(x, t) \geqslant \beta \}.
\end{equation}
В случае, если $u(x, t) < \beta$ при $t \in [0, T_1)$, положим $r(x) = T_1$. 

Введем обозначения
$$
H^+(\tau) = \{(x, t) \in {\mathcal{I}} \times (0, \tau): \; {\mathcal{H}}(u(x, t)) = 1\}, \quad \tau \in (0, T_1],
$$
$$
H^-(\tau) = \{(x, t) \in {\mathcal{I}} \times (0, \tau): \; \mathcal{H}(u(x, t)) = -1\}, \quad \tau \in (0, T_1],
$$
$$
H^+ = H^+(T_1), \quad H^- = H^-(T_1).
$$

 Отметим, что поскольку $u$ непрерывна, то $r$ полунепрерывна снизу и выполнены равенства
$$
H^+(\tau) = \{(x, t) \in \mathcal{I} \times (0, \tau): t < r(x)\},
$$
$$
H^-(\tau) = \{(x, t) \in \mathcal{I} \times (0, \tau): t \geqslant r(x)\}.
$$

Множество точек $(x, r(x))$, $x \in \mathcal{I}$ образует  границу между фазами I и II. Покажем, что она не может иметь вид из \S\S~\ref{sec:single}-\ref{sec:multy}. Отметим, что на межфазовой границе типа линии уровня $u(x, t) = \beta$, см. Рис.~\ref{fig:2}, функция $r$ непрерывна, а ``спящей границе'' соответствует разрыв первого рода функции $r$.

\begin{thm}\label{thm:noFB}
    Рассмотрим $u \in W_q^{2, 1}(Q_{T_1})$ решение задачи \eqref{problem_P} с топологически нетрансверсальной начальной функцией $\varphi \in W^{2-2/q}_q(\Omega)$, удовлетворяющей условию \eqref{eq:phix0}.  
    
    Предположим, что на некотором интервале $(a, b)$ функция $r$, определенная в \eqref{eq:r}, удовлетворяет неравенству $r(x) < T_1$. 
    
    Тогда
    \begin{itemize}
        \item[(i)] если $r$ непрерывна на $(a, b)$, то  она постоянна на $(a, b)$;
        \item[(ii)] $\limsup_{x \to x_1-}r(x) = \limsup_{x \to x_1+}r(x)$ для любого $x_1 \in (a, b)$.
    \end{itemize}
\noindent
Более того, функция $\tilde{r}$ определенная равенством
        $$
            \tilde{r}(x_1) = \limsup_{x \to x_1}r(x), \quad x_1 \in (a, b)
        $$
   постоянна, и $u(x_1,\tilde{r}(x_1)) = \beta$.
\end{thm}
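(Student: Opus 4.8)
The plan is to separate the ``value'' assertion $u(x_1,\tilde r(x_1))=\beta$, which is soft, from the constancy of $\tilde r$, which carries the real content. For the value assertion I would use only the continuity of $u$: since $q>3$, the embedding $W^{2,1}_q(Q_{T_1})\hookrightarrow \mathcal C(\overline Q_{T_1})$ holds, so $u$ is continuous. Fix $x_1\in(a,b)$ and choose $x_n\to x_1$ with $r(x_n)\to\tilde r(x_1)=\limsup_{x\to x_1}r(x)$. Because $r(x_n)<T_1$, the definition \eqref{eq:r} together with continuity of $u$ gives $u(x_n,r(x_n))=\beta$; letting $n\to\infty$ yields $u(x_1,\tilde r(x_1))=\beta$. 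Note also that, by \eqref{eq:ualpha} and \eqref{eq:H0pm}, on $\mathcal I\times(0,T_1)$ only the transition from phase~I to phase~II occurs, so along every vertical segment the source $f=\mathcal H(u)$ is nonincreasing in $t$ and jumps exactly once, from $+1$ to $-1$, at $t=r(x)$.

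Then the constancy part. I would first record the structural consequence of the previous remark: $f_{(h)}\le 0$ on $\mathcal I\times(0,T_1-h)$, so $u_{(h)}$ is a \emph{subsolution} of the heat equation there, which yields a one-sided bound on the time difference quotients (as in the argument behind Lemma~\ref{lemma4}) and lets me regard the free boundary as the top level set $\{u=\beta\}$ of the \emph{caloric} function $u-t$ (recall $\partial_t(u-t)-\Delta(u-t)=\mathcal H(u)-1=0$ in phase~I). To prove (i), assume $r$ is continuous on $(a,b)$ and suppose, for contradiction, that it is nonconstant. Set $t_0=\inf_{(a,b)}r$; below $t_0$ the whole slab $(a,b)\times(0,t_0)$ is phase~I, $u<\beta$, and $u-t$ is caloric there, with $u(\cdot,t_0)\le\beta$ and $u(x^0,t_0)=\beta$ at any infimizing $x^0$. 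The aim is to show that this single touching of the level $\beta$ must propagate, i.e. $u(\cdot,t_0)\equiv\beta$ on $(a,b)$, whence $r\equiv t_0$, contradicting nonconstancy. I would extract the propagation from the strong maximum principle applied to $u-t$, combined with the hysteresis feedback: the source becomes $-1$ precisely on the set that has already reached $\beta$, and this feedback is what forbids a free boundary that nucleates at an isolated point and spreads.

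Finally, claims (ii) and the genuinely discontinuous case. Here $r$ need not be continuous, and a jump of $r$ corresponds exactly to a ``sleeping boundary''; I would pass to the upper semicontinuous envelope $\tilde r(x)=\limsup_{x'\to x}r(x')$. By the value part, $u(x,\tilde r(x))=\beta$ for every $x$, so $\tilde r$ is itself a touching curve of the level $\beta$; a mismatch $\limsup_{x\to x_1-}r\ne\limsup_{x\to x_1+}r$ would make $\tilde r$ jump, producing a one-sided touching configuration of the same type excluded in step~(i). Running the propagation argument of step~(i) for $\tilde r$ then forces $\tilde r$ to be constant and the two one-sided $\limsup$'s to coincide. \textbf{The main obstacle} is exactly the propagation step: because $\partial_t u$ jumps across the free boundary (from $\partial_t u=1+\Delta u$ in phase~I to $\partial_t u=-1+\Delta u$ in phase~II), the naive comparison functions $u\pm t$ do not attain their extrema at the touching point on the initial slab, and a purely local maximum-principle or Hopf argument cannot by itself distinguish a flat free boundary from a curved one. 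The real work is to convert the one-sided difference-quotient bound and the right-continuity of $\mathcal H$ into a quantitative obstruction to a nucleating free boundary, thereby ruling out any strict interior extremum of $r$ (equivalently of $\tilde r$) and, by connectedness of $(a,b)$, yielding constancy.
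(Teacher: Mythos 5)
Your reduction of the value assertion $u(x_1,\tilde r(x_1))=\beta$ to the continuity of $u$ is correct and matches the paper, as is the structural observation that each vertical line in $\mathcal I\times(0,T_1)$ carries at most one phase switch, from I to II. But the heart of the theorem --- constancy of $r$ in (i), equality of the one-sided limits in (ii), and constancy of $\tilde r$ --- is exactly the ``propagation step'' that you yourself flag as the main obstacle and do not resolve. Your candidate tool, the caloric function $u-t$ on the phase-I slab $(a,b)\times(0,t_0)$ with $t_0=\inf r$, cannot close the argument for the reason you already note: $u-t\leqslant\beta-t<\beta$ at the touching point when $t_0>0$, so the level $\beta$ is never attained there and the strong maximum principle has nothing to bite on. The one-sided bound $f_{(h)}\leqslant 0$ likewise gives only an estimate on $u_{(h)}$, not an obstruction to a nucleating free boundary. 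So the proposal, as written, contains a genuine gap.

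The missing idea in the paper is a gluing construction (Proposition \ref{lem:uInd}) that works on the \emph{phase-II} side rather than the phase-I side. Since $u\leqslant\beta$ on $\mathcal I\times(0,T_1)$ (Lemma \ref{lem:uleqbeta}), every free-boundary point $(x,r(x))$ is a one-sided maximum of $u$, whence $u=\beta$ and $\partial_x u=0$ there. These two matching conditions let one extend $u$ from $A=\Int H^-$ by the constant $\beta$ across the free boundary; the glued function $v$ stays in $W^{2,1}_q$, satisfies $\partial_t v-\Delta v=-\chi_{A}\leqslant 0$ and $v\leqslant\beta$, and attains the value $\beta$ on the whole open complement of $A$. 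Now the strong maximum principle (Proposition \ref{lem:max}) does bite: if $r$ were continuous and nonconstant, $v$ would equal $\beta$ on a nonempty open subset of $H^-$ lying below an interior touching point, forcing $\partial_t u-\Delta u=0$ there and contradicting \eqref{eq:H0pm}. For (ii), a jump of $\limsup r$ at $x_1$ produces a phase-II rectangle $(x_1,x_2)\times(t_2,t_1]$ abutting the touching point $(x_1,t_1)$; either $u$ hits $\beta$ inside it (same contradiction), or $u<\beta$ strictly, and then the normal-derivative (Hopf) lemma, Proposition \ref{lem:hopf}, yields $\partial_x u(x_1,t_1)\neq 0$, contradicting the touching condition. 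Neither of these two mechanisms --- the subsolution glued across the free boundary, and the Hopf lemma at the jump --- appears in your proposal, and without them the constancy claims remain unproven.
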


Пункты $(i)$ и $(ii)$ показывают, что у задачи \eqref{problem_P} не может быть части свободной границы, описываемой аналогично \S~\ref{sec:single}. Действительно, в \S~\ref{sec:single} часть границы, лежащая на линии уровня $u=\beta$, задается  равенством $x = a(t)$ со строго монотонной непрерывной функцией $a$, а следовательно, может быть описана равенством $t=r(x)$  с непрерывной функцией $r$, что невозможно согласно пункту $(i)$. Аналогично, ``спящая граница'' невозможна согласно пункту $(ii)$. Мы предполагаем, что задача \eqref{problem_P} с нетрансверсальной начальной функцией не имеет решения, но не можем исключить существования патологического решения, у которого множество точек переключения гистерезиса имеет положительную меру и нигде не плотно (в этом случае теорема ~\ref{thm:noFB} неприменима) или для некоторых $t_0$ выполнено $u(x, t_0) = \beta$ на открытом интервале $x \in (a, b)$ (что соответствует последнему утверждению теоремы~\ref{thm:noFB}).

В доказательстве теоремы~\ref{thm:noFB} мы будем использовать следующие версии сильного принципа максимума \cite[гл.~3]{krylov1985} и леммы о нормальной производной (см., напр., \cite[Theorem~3.1]{AN2019} или \cite{N2012}). 

\begin{prop}\label{lem:max}
    Предположим, что в некотором прямоугольнике $K = (x_1, x_2) \times (t_1, t_2]$ функция $u(x, t) \in W^{2, 1}_q(K)$, $q>3$, удовлетворяет неравенствам
    $$
    \partial_t u(x, t)-\Delta u(x, t) \leqslant 0, \quad \mbox{для $(x, t) \in K$}
    $$
    и при некотором $M \in \mathbb{R}$
    $$
    u(x, t) \leqslant M, \quad \mbox{для $(x, t) \in \overline{K}$}.
    $$
    Тогда если для некоторого $(x',t') \in K$ выполнено $u(x',t') = M$, то
    $$
        u(x,t) = M, \quad \mbox{для $(x, t) \in (x_1, x_2)\times (t_1, t')$}.
    $$
\end{prop}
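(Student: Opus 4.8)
The plan is to reduce the statement to the strong minimum principle for a nonnegative supersolution of the heat operator. Put $v := M - u$. Then $v \in W^{2,1}_q(K)$ with $q > 3$, so by the parabolic Sobolev embedding (see \cite[гл. II, лемма~3.1]{LSU67}) the function $v$ is continuous on $\overline{K}$ and continuously differentiable in $x$; moreover $v \geq 0$ on $\overline{K}$, it satisfies $\partial_t v - \Delta v \geq 0$ a.e. in $K$, and $v(x', t') = 0$ at the interior point $(x', t')$. The claim becomes: $v \equiv 0$ on the full lower block $(x_1, x_2) \times (t_1, t')$. I would prove this by first establishing local propagation of the zero set and then globalizing by a connectedness argument.

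For the local step I would argue by contradiction using a parabolic barrier together with the normal-derivative lemma. Suppose $v$ is not identically zero below $(x', t')$. Continuity of $v$ lets me choose an interior point $P^* = (\bar x, \bar t)$ on the boundary of a small space-time ball $B \subset K$ such that $v > 0$ in the interior of $B$ while $v(P^*) = 0$, and such that the inner normal to $B$ at $P^*$ is spatial rather than purely temporal. Comparing $v$ from below on $B$ with $\eta\, w$, where $w$ is a standard exponential parabolic barrier satisfying $\partial_t w - \Delta w < 0$ on the relevant annulus and $w \leq 0$ on its outer boundary, and $\eta > 0$ is small, the weak maximum principle for $W^{2,1}_q$ strong solutions (the Aleksandrov–Krylov estimate, \cite[гл.~3]{krylov1985}) forces a strictly positive inner spatial derivative $\partial_x v(P^*) > 0$. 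This is exactly the normal-derivative (Hopf) lemma in the strong-solution setting of \cite[Theorem~3.1]{AN2019}, \cite{N2012}. But $P^*$ is an interior point at which $v$ attains the value $0 = \min v$, and since $v$ is $C^1$ in $x$ (as $q > 3$) this gives $\partial_x v(P^*) = 0$, a contradiction. Hence $v$ vanishes on a full space-time neighborhood below each of its interior zeros.

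It remains to globalize. Let $\Sigma = \{v = 0\} \cap K$, which is relatively closed by continuity and, by the local step, propagates both sideways across the interval $(x_1, x_2)$ and backward in time away from each of its interior points. Since $(x', t') \in \Sigma$, I fix any $\tau \in (t_1, t')$ and run the usual first-time argument: let $s^*$ be the infimum of times whose spatial slice is not entirely contained in $\Sigma$, and derive a contradiction at $s^*$ from the local propagation lemma. This yields $v \equiv 0$ on $(x_1, x_2) \times (t_1, t')$, that is $u \equiv M$ there, which is the assertion.

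The main obstacle is the low regularity: $u$ is only a strong $W^{2,1}_q$ solution, so the classical $C^{2,1}$ forms of the maximum principle and of the Hopf lemma are not directly applicable and must be replaced by their strong-solution counterparts; this is precisely why the hypothesis $q > 3$ (which secures both $C^1_x$ regularity and enough integrability for the Aleksandrov–Krylov machinery) is imposed and why \cite{krylov1985} and \cite{AN2019} are invoked. A secondary technical point is the fixed-time sideways propagation across the interval, which requires positioning the barrier balls so that the contact point $P^*$ is reached along a spatial normal; once this is arranged, the connectedness of $(x_1, x_2)$ closes the argument.
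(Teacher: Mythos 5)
The paper does not actually prove Proposition~\ref{lem:max}: it is quoted as a known form of the strong maximum principle for $W^{2,1}_q$-solutions with a reference to \cite[гл.~3]{krylov1985}, so there is no internal proof to compare with, and what you are offering is a reconstruction of the classical Nirenberg--Friedman argument. Your overall strategy (pass to $v=M-u\geqslant 0$, prove a local propagation lemma via an interior ball and the normal-derivative lemma, then globalize by connectedness) is the standard one, and the regularity bookkeeping is fine: for $q>3$ in one space dimension $W^{2,1}_q$ embeds into $C^1$ in $x$, and both the Aleksandrov--Krylov estimate and the strong-solution Hopf lemma of \cite{AN2019}, \cite{N2012} are applicable.

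There is, however, a genuine gap in the propagation step. The ball-plus-Hopf argument you describe works only when the contact point $P^*$ is reached along a \emph{spatial} normal, and this yields exactly the horizontal propagation: if $v$ vanishes at one point of a time slice, it vanishes on the whole slice. It does not yield propagation backward in time. After the horizontal step the zero set has the form $(x_1,x_2)\times Z$ for some relatively closed $Z\ni t'$, and if $(t_1,t')\setminus Z$ were nonempty it would contain an interval $(\sigma_1,\sigma_2)$ with $v>0$ on $(x_1,x_2)\times(\sigma_1,\sigma_2)$ and $v\equiv 0$ on the slice $t=\sigma_2$. Every ball contained in this positivity set and touching the zero set touches the slice $t=\sigma_2$ from below, with a purely temporal normal --- precisely the configuration your local lemma excludes --- so the ``first-time argument'' cannot produce a contradiction as written; your claim that $v$ vanishes ``on a full space-time neighborhood below each of its interior zeros'' is exactly the assertion that still needs proof. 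Closing it requires the separate backward-in-time lemma (Friedman's Lemma~3, with a paraboloid-type barrier), or, more simply, a comparison of $v$ on $(x_1,x_2)\times(\sigma_1',\sigma_2]$ with the solution of the heat equation having zero lateral data and a nonnegative, not identically zero initial datum dominated by $v(\cdot,\sigma_1')$: that solution is strictly positive at $t=\sigma_2$, forcing $v(\cdot,\sigma_2)>0$ and the desired contradiction. With that lemma supplied, your argument is complete.
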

\begin{prop}\label{lem:hopf}
    Предположим, что в некотором прямоугольнике $K = (x_1, x_2)\times (t_1, t_2)$ функция $u \in W^{2, 1}_q(K)$ при некотором $M \in \mathbb{R}$ удовлетворяет условиям
    $$
    \partial_t u-\Delta u \leqslant 0, \quad u \leq M, \quad u \not\equiv M.
    $$
    Тогда если для некоторого $t' \in (t_1, t_2]$ выполнено $u(x_1, t') = M$, то
    $$
        \partial_x u (x_1, t') \ne 0.
    $$
\end{prop}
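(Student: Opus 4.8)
The statement is the parabolic boundary point (Hopf) lemma at a \emph{lateral} boundary point, so the plan is to prove it by the classical barrier/comparison method, obtaining in fact the sharper conclusion $\partial_x u(x_1,t')<0$. First I would normalize by subtracting the constant, setting $v:=u-M$, so that $\partial_t v-\Delta v\le 0$, $v\le 0$, $v(x_1,t')=0$ and $v\not\equiv 0$ in $K$. Since $u\in W^{2,1}_q(K)$ with $q>3$, the parabolic embedding $W^{2,1}_q\hookrightarrow C^{1+\gamma,(1+\gamma)/2}$ makes $\partial_x v$ continuous up to $\overline K$, so $\partial_x v(x_1,t')$ is a genuine pointwise value. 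By the strong maximum principle (Proposition \ref{lem:max}) together with $v\not\equiv 0$, I may assume $v<0$ at interior points near $P:=(x_1,t')$, which is the strict negativity the barrier argument needs.

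For the barrier I would put the center $(x_c,t')=(x_1+R,t')$, write $r^2=(x-x_c)^2+(t-t')^2$, and work in the lower cap $D=\{r<R\}\cap\{x_1<x<x_1+\tfrac R2\}\cap\{t\le t'\}$, with $R>0$ so small that $\overline D\subset K$ and $v\le -\delta<0$ on the chord $\{x=x_1+\tfrac R2\}\cap\overline D$; keeping $D$ in $\{t\le t'\}$ also covers the endpoint case $t'=t_2$. As comparison function I take $h=e^{-\alpha r^2}-e^{-\alpha R^2}$, so that $h=0$ on the arc $\{r=R\}$, $h>0$ inside, and $\partial_x h(P)=2\alpha R\,e^{-\alpha R^2}>0$. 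A direct computation gives
$$
\partial_t h-\Delta h=e^{-\alpha r^2}\bigl[\,2\alpha-2\alpha(t-t')-4\alpha^2(x-x_c)^2\,\bigr].
$$
On the cap one has $(x-x_c)^2>R^2/4$ and $|t-t'|<R$, so the bracket is at most $2\alpha(1+R)-\alpha^2R^2$, which is negative as soon as $\alpha>2(1+R)/R^2$. For such $\alpha$ the barrier $h$ is a supersolution in $D$.

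Finally I would compare $w:=v+\varepsilon h$, which satisfies $\partial_t w-\Delta w\le \varepsilon(\partial_t h-\Delta h)\le 0$ in $D$. On the circular part of the parabolic boundary $h=0$ and $v\le 0$, so $w\le 0$; on the chord $v\le -\delta$ and $h$ is bounded, so $w\le -\delta+\varepsilon\max h\le 0$ for $\varepsilon$ small; the top segment $\{t=t'\}$ is excluded from the parabolic boundary. The weak maximum principle then yields $w\le 0$ on all of $\overline D$, with $w(P)=0$. Restricting to the horizontal segment $x\mapsto w(x,t')$, $x\in(x_1,x_1+\tfrac R2)\subset\overline D$, which attains its maximum $0$ at the left endpoint $P$, gives $\partial_x w(x_1,t')\le 0$, whence $\partial_x u(x_1,t')=\partial_x v(x_1,t')\le-\varepsilon\,\partial_x h(P)<0$, which is the claim. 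The main obstacle is the barrier itself: unlike the elliptic case, a naive spatial ball is not a supersolution for the heat operator near the points where $x\approx x_c$, so the region must be cut down to the cap where the spatial term dominates the anisotropic time term; a secondary point requiring care is that $v\not\equiv M$ yields the needed strict negativity of $v$ off the boundary only through Proposition \ref{lem:max}, and the region must be kept in $\{t\le t'\}$ to accommodate $t'=t_2$.
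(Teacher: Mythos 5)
The paper does not actually prove Proposition~\ref{lem:hopf}: it is quoted from the literature (see \cite[Theorem~3.1]{AN2019}, \cite{N2012}), so there is no in-paper argument to compare with. Your barrier proof is the classical route to exactly this lemma, and its core is correct: the cap cut off by the chord $x=x_1+\tfrac R2$ is the right domain (your remark that a naive spatial ball fails for the heat operator is accurate), the computation $\partial_t h-\Delta h=e^{-\alpha r^2}\bigl[2\alpha-2\alpha(t-t')-4\alpha^2(x-x_c)^2\bigr]$ is correct, on the cap one indeed has $(x-x_c)^2>R^2/4$ and $|t-t'|<R$, so the bracket is $\leqslant 2\alpha(1+R)-\alpha^2R^2<0$ for $\alpha>2(1+R)/R^2$, and the comparison of $w=v+\varepsilon h$ on the parabolic boundary together with the one-sided derivative estimate at $P=(x_1,t')$ gives $\partial_x u(x_1,t')\leqslant-\varepsilon\,\partial_x h(P)<0$. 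The use of the embedding $W^{2,1}_q\hookrightarrow \mathcal{C}^{1+\gamma,(1+\gamma)/2}$ for $q>3$ to make $\partial_x u(x_1,t')$ a pointwise value, and the restriction to $\{t\leqslant t'\}$ to cover $t'=t_2$, are both appropriate.

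There is, however, one genuine gap, at the step ``by Proposition~\ref{lem:max} together with $v\not\equiv 0$, I may assume $v<0$ at interior points near $P$.'' This does not follow from the stated hypotheses, and in fact cannot: with only $u\not\equiv M$ in $K$, the proposition is literally false. Take $u(x,t)=M-(t-t^{\ast})_{+}$ with $t_1<t'\leqslant t^{\ast}<t_2$: then $u\in W^{2,1}_q(K)$, $\partial_t u-\Delta u=-\chi_{\{t>t^{\ast}\}}\leqslant 0$, $u\leqslant M$, $u\not\equiv M$, $u(x_1,t')=M$, yet $\partial_x u\equiv 0$. Proposition~\ref{lem:max} propagates an interior zero of $v=u-M$ only \emph{backwards} in time, so $v\not\equiv 0$ in $K$ is compatible with $v\equiv 0$ on the whole slab $(x_1,x_2)\times(t_1,t^{\ast})$ with $t^{\ast}\geqslant t'$, which is exactly the scenario your $\delta$ on the chord does not survive. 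The correct repair is to strengthen the hypothesis to $u\not\equiv M$ on $(x_1,x_2)\times(t_1,t')$ (equivalently, after one application of Proposition~\ref{lem:max}, $u<M$ there): setting $t^{\ast}=\sup\{t:\ v\equiv 0\ \text{on}\ (x_1,x_2)\times(t_1,t)\}$ one gets $t^{\ast}<t'$ and $v<0$ on $(x_1,x_2)\times(t^{\ast},t')$, and then choosing $R$ with $t'-\tfrac{\sqrt{3}}{2}R>t^{\ast}$ makes your chord estimate valid (the chord's top endpoint when $t'=t_2$ is handled by applying Proposition~\ref{lem:max} on the half-open rectangle $(x_1,x_2)\times(t_1,t_2]$). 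This strengthened hypothesis is what actually holds where the paper invokes the lemma (Case~2 in the proof of Theorem~\ref{thm:noFB}, where $u<\beta$ in the open rectangle), so your argument proves the statement in the form in which it is used; but as written, the strict-negativity step is unjustified, and given the literal hypotheses it is unjustifiable.
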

Также мы будем использовать следующее очевидное утверждение.
\begin{prop}\label{lem:uInd}
     Рассмотрим прямоугольник $K = (x_1, x_2)\times (t_1, t_2)$, функцию $u \in W^{2, 1}_q(K)$, открытое множество $A \subset K$ и $M \in \mathbb{R}$,  удовлетворяюшие условию
     $$
     u(x, t) = M, \; \partial_x u(x, t) = 0, \quad (x, t) = \partial A \setminus \partial K.
     $$
     Тогда функция 
\begin{equation}\label{eq:vu}
     v(x, t) = \begin{cases}
         u(x, t), & \quad (x, t) \in A, \\
         M, & \quad (x, t) \in K \setminus A 
     \end{cases}     
\end{equation}
     удовлетворяет условиям
     $$
      v \in W^{2, 1}_q(K), \quad   \partial_t v - \Delta v = (\partial_t u - \Delta u)\chi_{A}.
     $$     
\end{prop}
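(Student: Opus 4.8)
The plan is to verify directly that $v$ possesses the weak derivatives $\partial_t v = \chi_A\,\partial_t u$, $\partial_x v = \chi_A\,\partial_x u$ and $\partial_{xx} v = \chi_A\,\partial_{xx} u$, each of which lies in $L^q(K)$ because $u \in W^{2,1}_q(K)$; membership $v \in W^{2,1}_q(K)$ and the formula $\partial_t v - \Delta v = \chi_A(\partial_t u - \Delta u)$ then follow at once, since $\Delta = \partial_{xx}$ here. As a preliminary, because $q>3$ the embedding used in \eqref{NN-3}--\eqref{NN-3a} gives $u, \partial_x u \in C(\overline{K})$, so the hypotheses $u = M$ and $\partial_x u = 0$ on $\partial A\setminus\partial K$ hold pointwise and $v$ is continuous on $K$.

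The heart of the argument is a slicing (absolute-continuity-on-lines) computation rather than a direct application of the divergence theorem, the point being that $A$ is an \emph{arbitrary} open set whose boundary inside $K$ may be highly irregular. First I would fix $t$ and study $x\mapsto v(x,t)$. Writing $A_t=\{x:(x,t)\in A\}$, this section is open, and $v$ equals the $C^1$-function $u(\cdot,t)$ on $A_t$ and the constant $M$ on its complement. Decomposing $A_t\cap(\alpha,\beta)$ into its countably many components $(p_k,q_k)$ and using that $u=M$ at every interior endpoint (an interface point, where $(p_k,t)\in\partial A\setminus\partial K$), a short telescoping of $\int_{p_k}^{q_k}\partial_x u = u(q_k)-u(p_k)$ yields the identity $v(\beta,t)-v(\alpha,t)=\int_\alpha^\beta \chi_{A_t}\,\partial_x u\,dx$ for all $\alpha<\beta$. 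Hence $v(\cdot,t)$ is absolutely continuous with derivative $\chi_{A_t}\,\partial_x u$, and Fubini together with integration by parts on each line gives $\partial_x v = \chi_A\,\partial_x u$ weakly. The same slicing in the $t$-direction — using that $t\mapsto u(x,t)$ is absolutely continuous for a.e.\ $x$, that $u=M$ on the interface, and the elementary section fact $t^*\in\partial A_x \Rightarrow (x,t^*)\in\partial A$ — gives $\partial_t v = \chi_A\,\partial_t u$.

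For the second spatial derivative I would apply the identical slicing to the already identified function $g:=\partial_x v = \chi_A\,\partial_x u$. For a.e.\ $t$ the trace $x\mapsto \partial_x u(x,t)$ is absolutely continuous with derivative $\partial_{xx}u(\cdot,t)\in L^q$, and now the \emph{second} hypothesis $\partial_x u=0$ on $\partial A\setminus\partial K$ makes $g$ match continuously (to the value $0$) across the interface and kills exactly the endpoint terms in the telescoping $\int_{p_k}^{q_k}\partial_{xx}u = \partial_x u(q_k)-\partial_x u(p_k)$. This gives $g(\cdot,t)$ absolutely continuous with derivative $\chi_{A_t}\,\partial_{xx}u$, whence $\partial_{xx} v = \chi_A\,\partial_{xx} u$ weakly and in $L^q$. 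Collecting the three identities proves $v\in W^{2,1}_q(K)$ together with the stated equation.

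I expect the main obstacle to be precisely the irregularity of $\partial A$: one cannot simply integrate by parts over $A$, and the Cantor-function example shows that continuity together with piecewise matching does not by itself force the correct increment formula, so the explicit telescoping identity on each slice is what must be established. It is worth stressing that the two hypotheses enter at different differentiation orders — $u=M$ on the interface is what cancels the boundary contributions for $\partial_t v$ and $\partial_x v$, while $\partial_x u=0$ on the interface is exactly what is required, and used only, to pass to $\partial_{xx} v$.
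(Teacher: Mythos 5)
Your argument is correct, and there is an important point of comparison to make: the paper does not prove this proposition at all --- it is introduced there as an ``obvious statement'' (очевидное утверждение) and used without justification, so your write-up supplies details the paper omits. The route you choose is the right robust one: since $A$ is an arbitrary open set, $\partial A$ may well have positive two-dimensional measure, so integration by parts over $A$ is unavailable, whereas your slice-wise increment identity $v(\beta,t)-v(\alpha,t)=\int_\alpha^\beta \chi_{A_t}\,\partial_x u\,dx$ (and its analogues in the $t$-direction and for $g=\chi_A\,\partial_x u$) survives even a fat interface, because the complement of $A_t$ contributes nothing to the integral while $v$ is constant there; the telescoping over the countably many components converges absolutely since $\partial_x u(\cdot,t)$ is bounded on compact subintervals and $\partial_t u(x,\cdot)\in L^1$ for a.e.\ $x$. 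Your accounting of the hypotheses is also exactly right: $u=M$ on $\partial A\setminus\partial K$ kills the endpoint terms for $\partial_x v$ and $\partial_t v$, and $\partial_x u=0$ there is needed precisely (and only) so that $\chi_A\,\partial_x u$ matches continuously to the value $0$ across the interface, which is what closes the telescoping for $\partial_{xx}v$. Two small steps deserve to be written out rather than asserted: (a) the section facts $x^*\in\partial A_t\Rightarrow (x^*,t)\in\partial A$ and $t^*\in\partial A_x\Rightarrow (x,t^*)\in\partial A$ (immediate from openness of $A$), which guarantee that interior slice endpoints genuinely lie in $\partial A\setminus\partial K$ where the hypotheses apply pointwise (legitimate, since $q>3$ gives $u,\partial_x u\in\mathcal{C}(\overline{K})$, as you note); and (b) the ACL characterization you invoke --- that the continuous representatives of $u$ and $\partial_x u$ are absolutely continuous on a.e.\ line with derivatives $\partial_t u$ and $\partial_{xx}u$ respectively --- should be cited explicitly. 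With these made precise, your three identities $\partial_x v=\chi_A\partial_x u$, $\partial_t v=\chi_A\partial_t u$, $\partial_{xx}v=\chi_A\partial_{xx}u$ hold weakly and in $L^q(K)$, and both conclusions of the proposition follow, as you say.
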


Докажем следующее вспомогательное утверждение
\begin{lemma} \label{lem:uleqbeta}
Пусть $\varphi \in W^{2-2/q}_q(\Omega)$ удовлетворяет условию \eqref{eq:phix0}.  Если $u$ -- решение задачи \eqref{problem_P}, 
удовлетворяет \eqref{eq:ualpha}, \eqref{eq:ub} то выполнено неравенство
$$
u(x, t) \leqslant \beta, \quad \mbox{$x \in \mathcal{I}$, $t \in (0, T_1)$.}
$$
\end{lemma}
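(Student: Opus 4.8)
The plan is to establish the bound by an energy (Stampacchia-type) argument using the truncation $(u-\beta)_+$ as test function, exactly in the spirit of the proof of Lemma~\ref{lemma3}. The crucial structural observation is that wherever $u$ exceeds the threshold $\beta$, the hysteresis operator is forced into phase II: by \eqref{eq:H0pm} one has $\mathcal{H}(u(x,t))=-1$ on the set $\{u>\beta\}$, so the right-hand side $f=\mathcal{H}(u)$ of the heat equation equals $-1$ there. Thus on the set where the truncation is positive the forcing carries the favourable (cooling) sign and the equation cannot push $u$ any higher; this is precisely what the energy identity will detect.

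First I would record that $u$ solves $\partial_t u-\Delta u=f$ on the cylinder $K=\mathcal{I}\times(0,T_1)$ with $|f|\le 1$ and $f=-1$ on $\{u>\beta\}$. Setting $w=(u-\beta)_+$, I multiply the equation by $w$ and integrate over $\mathcal{I}\times(0,t)$. The time derivative produces $\tfrac12\int_{\mathcal{I}}w^2(x,t)\,dx-\tfrac12\int_{\mathcal{I}}w^2(x,0)\,dx$; the initial integral vanishes because $\varphi\le\beta$ on $\mathcal{I}$ by \eqref{eq:phix0}, so $w(\cdot,0)\equiv 0$. Next I integrate the Laplacian term by parts in $x$: the boundary contributions at $x=x_0\pm\varepsilon_1$ vanish since $u(x_0\pm\varepsilon_1,t)<\beta$ by \eqref{eq:ub}, whence $w=0$ on the lateral boundary, leaving $\int_0^t\int_{\mathcal{I}}|\partial_x w|^2\,dx\,ds\ge 0$. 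On the right-hand side $f\,w=-w\le 0$, because $f=-1$ precisely on $\{w>0\}$. Combining these contributions I obtain
$$
\tfrac12\int_{\mathcal{I}}w^2(x,t)\,dx+\int_0^t\int_{\mathcal{I}}|\partial_x w|^2\,dx\,ds\le 0,
$$
which forces $w(\cdot,t)\equiv 0$ for every $t\in(0,T_1)$, i.e. $u\le\beta$ on $K$, as claimed.

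The main obstacle is purely technical: justifying the chain rule and the integration by parts for the truncation $(u-\beta)_+$ of the $W^{2,1}_q$ solution $u$ (with $q>3$, so that $u$ is continuous and $\partial_x u\in L^q$). These are standard Stampacchia-type facts and are carried out exactly as in the proof of Lemma~\ref{lemma3}; the only points requiring the hypotheses specific to this statement are the vanishing of the initial and lateral boundary terms, which rest on \eqref{eq:phix0} and \eqref{eq:ub} respectively. An alternative would be to invoke a weak maximum principle on the open set $\{u>\beta\}$, on which $\partial_t u-\Delta u=-1\le 0$ and whose parabolic boundary satisfies $u\le\beta$; however, since that set is not a rectangle one cannot apply Proposition~\ref{lem:max} directly, so I prefer the energy method, which is self-contained and avoids any discussion of the boundary of an irregular domain.
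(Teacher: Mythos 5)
Your proof is correct, but it follows a genuinely different route from the paper. The paper argues by contradiction at a maximum point: assuming $u>\beta$ somewhere, it takes a point $(x_2,t_2)$ where $u$ attains its maximum over $\overline{\mathcal{I}}\times[0,t_1]$, notes that \eqref{eq:phix0} and \eqref{eq:ub} force this point to lie away from $t=0$ and from $\partial\mathcal{I}$, observes that by continuity $u>\beta$ (hence $\partial_t u-\Delta u=-1$) on a small rectangle $[x_2-\delta,x_2+\delta]\times[t_2-\delta,t_2]$, and then invokes the strong maximum principle (предложение~\ref{lem:max}) on that rectangle to rule out an interior maximum --- thereby sidestepping the irregularity of the set $\{u>\beta\}$ that you mention, by localizing to a rectangle where the inequality holds by continuity. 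Your Stampacchia-type argument with the truncation $(u-\beta)_+$ exploits exactly the same structural facts --- $\mathcal{H}(u)=-1$ on $\{u\geqslant\beta\}$ by \eqref{eq:H0pm}, and the vanishing of the initial and lateral contributions via \eqref{eq:phix0} and \eqref{eq:ub} --- but converts them into a sign condition in an energy identity rather than into a pointwise contradiction. What your approach buys is self-containedness: it needs only the weak formulation and the standard chain rule for truncations of $W^{2,1}_q$ functions (the same device already used in the proof of леммы~\ref{lemma3}), and it yields the conclusion directly rather than by contradiction. What the paper's approach buys is brevity, since предложение~\ref{lem:max} is stated anyway for the proof of теоремы~\ref{thm:noFB}. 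Both arguments are complete; the only technical points in yours (chain rule for $(u-\beta)_+$, integration by parts with zero lateral data) are standard for $q>3$, as you note.
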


\begin{proof}
    Предположим противное, для некоторых  $t_1 > 0, x_1 \in I$ выполнено $u(x_1, t_1) > \beta$. Рассмотрим $x_2 \in \overline{\mathcal{I}}$, $t_2 \in [0, t_1]$ такие что
\begin{equation}\label{eq:t1max}
    u(x_2, t_2) = \max_{x \in \overline{\mathcal{I}},\, t \in [0, t_1]} u(x, t) > \beta.
\end{equation}
    Из неравенств \eqref{eq:phix0}, \eqref{eq:ub} следует, что $t_2 \ne 0$, $x_2 \notin \partial \mathcal{I}$. Отметим, что если $u(x, t) > \beta$ то $\mathcal{H}(u(x, t)) = -1$, а значит для достаточно малого $\delta > 0$ выполнено
    $$
     \partial_t u - \Delta u = -1, \quad \mbox{$x \in [x_2-\delta, x_2+\delta]$,\ $t \in  [t_2-\delta, t_2]$},
    $$
    а следовательно по предложению \ref{lem:max} $\max_{x \in [x_2-\delta, x_2+\delta], t \in  [t_2-\delta, t_2]} u(x, t)$ достигается при $x = x_2 \pm \delta$ или $t = t_2-\delta$, что противоречит \eqref{eq:t1max}.
\end{proof}

\begin{proof}[Доказательство Теоремы \ref{thm:noFB}]
    В силу \eqref{eq:r} для $x \in \mathcal{I}$, удовлетворяющих $r(x) < T_1$, выполнено
\begin{equation}\label{eq:u0}
    u(x, r(x)) = \beta.
\end{equation}
Из Леммы \ref{lem:uleqbeta} следует, что точка $(x, r(x))$ является локальным максимумом функции $u$ и следовательно 
\begin{equation}\label{eq:ux0}
    \partial_x u(x, r(x)) = 0.
\end{equation}

\textbf{Пункт $(i)$.} Из непрерывности $r(x)$ следует, что прямоугольник $K = (a, b) \times (0, T_1)$, функция $u$ и множество $A = \Int H^- \cap K$ и $M = \beta$ удовлетворяют условиям предложения \ref{lem:uInd}. Следовательно для функции $v$, заданной равенством \eqref{eq:vu}, выполнены неравенства
    $$
    v(x, t) \leqslant M, \quad \partial_tv - \Delta v \leqslant 0,
    $$
    а значит, она  удовлетворяет условиям предложения \ref{lem:max}. Предположим, что для некоторых $x_1, x_2 \in (a, b)$ выполнено $r(x_1), r(x_2) \in (0, T_1)$, и $r(x_1) > r(x_2)$. Тогда $\{x_1\} \times (r(x_2), r(x_1)) \subset H^-$. Из непрерывности функции $r$ на $(a, b)$ следует, что множество $B = \Int H^- \cap \left((a, b)\times (0, r(x_1))\right)$ не пусто. Из предложения \ref{lem:max} следует, что 
    $$
    v(x, t) = M, \quad t < r(x_1), \quad x \in (a, b). 
    $$
    В частности, $u|_B = v|_B \equiv M$, а значит, $(\partial_t u - \Delta u)|_B = 0$, что противоречит \eqref{eq:H0pm}. Пункт $(i)$ доказан.

\textbf{Пункт $(ii)$.} Предположим противное. Не умаляя общности, для некоторого $x_1 \in (a, b)$ выполнено неравенство 
\begin{equation}\label{eq:t0}
t_1 = \limsup_{x \to x_1-}r(x) > \limsup_{x \to x_1+}r(x) = \tilde{t}_1.    
\end{equation}
Из непрерывности $u$ и $\partial_xu$ и равенств \eqref{eq:u0}, \eqref{eq:ux0} следует, что 
\begin{equation}\label{eq:ux0t0}
u(x_1, t_1) = \beta, \quad \partial_xu(x_1, t_1) = 0.
\end{equation}
Из \eqref{eq:t0} cледует, что существуют такие $x_2 \in (x_1, b)$ и $t_2 \in (\tilde{t}_1, t_1)$, что
$$
r(x) < t_2, \quad x \in [x_1, x_2],
$$
а значит, $(x_1, x_2) \times (t_2, t_1] \subset H^-$. Рассмотрим два случая.

\textbf{Случай 1.} Для некоторой точки $(x', t') \in (x_1, x_2) \times (t_2, t_1]$ выполнено равенство $u(x', t') = \beta$.
Тогда  из предложения \ref{lem:max} аналогично доказательству пункта $(i)$ следует, что
$$
u = \beta \quad \mbox{ на $K = (x_1, x_2)\times(t_2, t')$},
$$
а значит, $(\partial_t u - \Delta u)|_{K} = 0$, что противоречит \eqref{eq:H0pm}.

\textbf{Случай 2.} Выполнены неравенства
$$
u(x, t) < \beta \quad \mbox{на $(x_1, x_2)\times(t_2, t_1]$}.
$$
Тогда из равенств \eqref{problem_P}, \eqref{eq:H0pm}, \eqref{eq:ux0t0} следует, что функция $u|_{(x_1, x_2)\times(t_2, t_1]}$ удовлетворяет условиям предложения \ref{lem:hopf}. Следовательно $\partial_x u(t_1, x_1) < 0$, что противоречит \eqref{eq:ux0t0}.

Пункт $(ii)$ доказан.

Докажем последнее утверждение теоремы. Обозначим 
$$
A = \Int(\{(x, t): x \in (a, b), t \geqslant r(x)\}), 
$$ 
    $$
    A_{t} = \{x \in (a, b): (x, t) \in A\}.
    $$
    Отметим, что для любого $t_0 \in (0, T)$ множество $A_{t_0}$ открыто. 
    Поэтому оно является  объединением не более чем счетного количества  интервалов:
    $$
    A_{t_0} = \bigcup_i (x_i, y_i).
    $$
    Покажем, что если $x_i \ne a$, то 
    \begin{equation}\label{eq:uxit0}
    u(x_i, t_0) = \beta, \quad \partial_x u(x_i, t_0) = 0.
    \end{equation}
    Действительно, поскольку $(x_i, y_i) \times \{t_0\} \subset A$, то
    $$
    r(x) < t_0, \quad x \in (x_i, y_i),
    $$
    следовательно $\limsup_{x \to x_i+}r(x) \leqslant t_0$. Рассмотрим 2 случая.

\noindent \textbf{Случай 1.} $\limsup\limits_{x \to x_i+}r(x) = t_0$. Из равенств \eqref{eq:u0}, \eqref{eq:ux0} ввиду непрерывности $\partial_x u$ следует \eqref{eq:uxit0}.
    
\noindent \textbf{Случай 2.} $\limsup\limits_{x \to x_i+}r(x) = t_1 < t_0$. Из пункта $(ii)$ следует, что $\limsup\limits_{x \to x_i-}r(x) = t_1$, а значит существует такое $\varepsilon > 0$, что 
    $$
    r(x) < t_0, \quad x \in (x_i -\varepsilon, x_i + \varepsilon),
    $$
    следовательно $x_i \in A_{t_0}$, противоречие. 
    
    Таким образом, \eqref{eq:uxit0} доказано.
    
    Аналогично можно показать, что для $y_i \ne b$ выполнено 
    \begin{equation*}
    u(y_i, t_0) = \beta, \quad \partial_x u(y_i, t_0) = 0,
    \end{equation*}
    и таким образом 
    \begin{equation*}\label{eq:uA0}
    u(x, t_0) = \beta, \quad x \in \partial A_{t_0} \setminus \{a, b\}.
    \end{equation*}

    Пусть теперь $x_0$ -- какая-нибудь точка из интервала $(a, b)$. Заметим, что если $(x_0,t_1) \in H^{-}$, то $\{x_0\} \times (t_1, T_1) \subset H^-$. Поэтому для любого $x_0 \in (a, b)$ найдется $t_0 \geq 0$ такое, что $A \cap \{x = x_0\} = \{x_0\} \times (t_0, T_1)$.

    
    Поскольку  $\{x_0\} \times (t_0, T_1) \subset A$, то $\limsup_{x \to x_0} r(x) \leqslant t_0$. Поскольку $(t_0, x_0) \notin A$, то $\limsup_{x \to x_0} r(x) \geqslant t_0$. А значит, $\tilde{r}(x_0) = \limsup_{x \to x_0} r(x) = t_0$ и следовательно
    $$
    A = \{(x, t): x \in (a, b), t \geqslant \tilde{r}(x)\},
    $$
    и из \eqref{eq:u0}, \eqref{eq:ux0} получаем ввиду непрерывности $\partial_x u$
    \begin{equation}\notag
    u(x, \tilde{r}(x)) = \beta, \quad \partial_x u(x, \tilde{r}(x)) = 0, \quad x \in (a, b).        
    \end{equation}    
    Аналогично пункту $(i)$ отсюда следует, что для некоторого $\tau \in (0, T_1)$ выполнено
    $$
    \tilde{r}(x) = \tau, \quad x \in (a, b), \quad \mbox{и} \quad u(x, \tau) = \beta, \quad x \in (a, b),
    $$
и последнее утверждение доказано.
\end{proof}

\vspace{0.4cm}
Авторы признательны А.И.~Назарову за чрезвычайно полезные обсуждения и замечания, которые способствовали существенному улучшению изложения. 

\bibliography{Bibliography-hysteresis}

\noindent
Российский университет 

\noindent
дружбы народов, г. Москва, 

\noindent
\textit{E-mail:} apushkinskaya@gmail.com
\vspace{0.4cm}

\noindent
Pontifícia Universidade Católica do Rio

\noindent
de Janeiro - PUC-Rio, г. Рио де Жанейро

\noindent
\textit{E-mail:} sergey.tikhomirov@gmail.com
\vspace{0.4cm}

\noindent
Санкт-Петербургский государственный 

\noindent
университет, г. Санкт-Петербург

\noindent
\textit{E-mail:} uraltsev@pdmi.ras.ru

\newpage
Apushkinskaya D.E., Tikhomirov S.B., Uraltseva N.N.

\textbf{Properties of the phase boundary in the parabolic problem with hysteresis} \vspace{0.3cm}

\noindent
\textit{Keywords:} hysteresis, parabolic equation, phase boundary, transversality, existence theorem

\noindent
\textit{abstract:} We study solutions of parabolic equations with a discontinuous hysteresis operator, described by a free interface boundary. It is established that for spatially transverse initial data from the space $W^{2-2/q}_q$ with $q > 3$, there exists a solution in the space $W^{2,1}_q$, where the interface boundary exhibits Holder continuity with an exponent of $1/2$. Furthermore for initial data from the space $W^2_\infty$, it is proven that the interface boundary satisfies the Lipschitz condition. It is shown that for non-transversal initial data, solutions with an interface boundary do not exist.

\vspace{0.3cm}

\noindent
RUDN University, Moscow 

\noindent
\textit{E-mail:} apushkinskaya@gmail.com
\vspace{0.4cm}

\noindent
Pontifícia Universidade Católica do Rio

\noindent
de Janeiro - PUC-Rio, Rio de Janeiro

\noindent
\textit{E-mail:} sergey.tikhomirov@gmail.com
\vspace{0.4cm}

\noindent
St. Petersburg State University, 

\noindent
St. Petersburg

\noindent
\textit{E-mail:} uraltsev@pdmi.ras.ru
\end{document}